\def\R{{\mathbb R}}
\def\N{{\mathbb N}}
\def\Z{{\mathbb Z}}
\def\T{{\mathbb T}}
\def\E{{\mathbb E}}
\def\P{{\mathbb P}}
\let\oldmarginpar\marginpar
\renewcommand\marginpar[1]{\-\oldmarginpar[\raggedleft\footnotesize #1]%
{\raggedright\footnotesize #1}}
\newcommand{\rst}[1]{\ensuremath{{\raise-.5ex\hbox{\tiny$\arrowvert$}}%
\raise-.5ex\hbox{\tiny$#1$}}}
\global\let\AddToReset=\@addtoreset}
\theoremstyle{plain}
\newtheorem{theorem}{Theorem}[section]
\newtheorem{lemma}[theorem]{Lemma}
\newtheorem{corollary}[theorem]{Corollary}
\newtheorem{assumption}{Assumption}
\theoremstyle{definition}
\newtheorem{remark}[theorem]{Remark}
\newtheorem*{remark*}{Remark}
\newtheorem*{lemma*}{Lemma}
\begin{document}

\title[Hydrodynamic limit, chemotaxis in   
given heterogeneous environment] 
{A hydrodynamic limit for chemotaxis in a given heterogeneous 
environment \vskip0.5cm
{\tiny {\sl In honor of Willi J\"ager, on the occasion of his 75th birthday.} }}

\author[S.\ Gro{\ss}kinsky]{Stefan Gro{\ss}kinsky}
\address[]{University of Warwick, Mathematics Institute, Zeeman Buildg., 
Coventry CV4 7AL, UK}
\email{S.W.Grosskinsky@warwick.ac.uk}
\author[D.\ Marahrens]{Daniel Marahrens}
\address[]{Max-Planck-Institute for Mathematics in the Sciences  (MPI MIS),
Inselstr.  22, D-04109 Leipzig, Germany}
\email{}
\author[A.\ Stevens]{Angela Stevens}
\address[]{Westf\"alische Wilhelms-Universit\"at M\"unster, 
Applied Mathematics, Einsteinstr. 62, D-48149 M\"unster, Germany}
\email{angela.stevens@wwu.de}

\begin{abstract}
In this paper the first equation within a class of well known
chemotaxis systems is derived as a hydrodynamic limit from a stochastic 
interacting many
particle system on the lattice. The cells are assumed to interact with 
attractive chemical molecules on a finite number of lattice sites, but they only
directly interact among themselves on the same lattice site. The chemical environment is assumed
to be stationary with a slowly varying mean, which results in a non-trivial
macroscopic chemotaxis equation for the cells. Methodologically  
the limiting procedure and its proofs are based on results by Koukkus \cite{Kou99} and 
Kipnis/Landim \cite{KL99}.
Numerical simulations extend and illustrate the theoretical findings.
\end{abstract}

\date{\today}

\keywords{
chemotaxis, interacting stochastic many particle system, hydrodynamic limit,
stochastic lattice gas, block estimates.}


\maketitle

\tableofcontents

\section{Introduction and result}
\label{sec:intro}

In this paper we derive chemotaxis-like equations 
as a hydrodynamic limit of a stochastic lattice gas. Chemotaxis
describes the directed motion of mainly biological species towards higher or
lower concentrations of chemical signals. Here we consider positive chemotaxis
of \emph{cells},
i.e. motion towards higher concentrations of a chemical
signal, therefore the chemical signal is denoted as \emph{chemo-attractant}.\\


Keller and Segel proposed in
\cite{KS70} a phenomenological chemotaxis model on the macroscopic level
for the aggregation and self-organization of the cellular slime mold amoeba
{\sl Dictyostelium discoideum} (Dd).
This can be written as
%
\begin{align}\label{cells}
 \partial_t \rho &= \nabla\cdot \big(k(\rho, \vartheta)\nabla \rho 
- \chi (\rho,\vartheta)\nabla \vartheta \big)\\
&= \nabla\cdot \big(k(\rho, \vartheta)\nabla \rho - \tilde \chi (\rho,\vartheta)
\rho \nabla \vartheta\big), \nonumber \\
 \partial_t \vartheta &= \Delta \vartheta + r(\rho,\vartheta),\label{CA}
\end{align}
where $\rho$ is the (volume) density of amoebae, $\vartheta$ is the density of the attractive
chemical molecules, $k$ and $\tilde \chi = \chi/\rho$ are functional 
parameters describing the strength of random motion of the cells and 
their chemotactic sensitivity, 
respectively, and $r$ comprises the reaction mechanisms. 
In \cite{KS70}, this cross-diffusion system was motivated by the macroscopic 
phenomenon
being experimentally observed, namely movement of the amoebae in direction of higher concentrations of a chemical signal, i.e. their movement up chemical gradients.
Such general types of chemotaxis systems are relevant 
also
in the context of other biological species. One expects this system to be an accurate description for chemotaxis phenomena occuring in 
systems of many cells and
signal molecules. There is a large literature on the formal derivation of  
macroscopic equations from
microscopic particle systems in such settings. Rigorously this has been 
achieved in much fewer cases, e.g.
by hydrodynamic 
or so-called moderate limits. 
The lack of an ellipticity condition for the limiting PDE-system, respectively
the strong clustering of the cells is a major technical problem in this case.

The first rigorous derivation of a general class of chemotaxis systems from a stochastic
interacting many particle system was given in
\cite{Ste00}, where the cells and the chemical molecules interact moderately
with each other
in the sense described in
\cite{Oel89}. The motion of the cells and the molecules are governed by interacting
stochastic differential equations, and production and decay of the
chemical molecules are modeled via Poisson point processes.
In \cite{Ste00} explicit error estimates could be given, which show that the 
total
number of particles does not have to be too large for the limiting PDE-system to be
a good approximation for the particle model.
The moderate rescaling requires that in the limiting procedure, when the total 
particle number (cells and chemical molecules) tends to
infinity, the main range of interaction of the particles 
tends to zero. 
The number of particles sensed in
this main range of interaction tends to infinity too, while this number is 
still only a vanishing fraction of the total particle number. This is sufficient for 
correlations to become small enough in the limit. The main technical
complication in the 
proof is the cross-diffusion structure of  
the chemotaxis system. The ellipticiy
condition, heavily used in \cite{Oel89}, is no longer valid in this case. 
To overcome this problem, two shadow systems were introdcued, in order
to freeze the critical non-linearity 
and a priori estimates were derived. This technique applies also to
more general systems of PDEs.
\\

In principle it would be desirable to also derive 
chemotaxis like systems as hydrodynamic limits in the sense of 
\cite{KL99}.
In this paper, we rigorously derive equation (\ref{cells}) 
from a microscopic stochastic many particle system on a 
lattice, so only one equation, not the full system.  
The cells are 
assumed to interact via a finite number of lattice sites 
with the chemical molecules.
This corresponds to the assumption that the cells detect chemical
molecules in a fixed small region around themselves. They do not
change the chemical environment in this case. 
The interaction of the cells among themselves takes place only at the 
same lattice site and not among cells located on neighboring sites.
This interaction is described by a function $g$
which among others fulfills conditions comparable to an ellipticity condition 
for the related limiting PDE. These will be specified later.
So we consider the case, where a too strong
clustering/aggregation of cells - a typical and important effect of 
chemotaxis and 
self-organization in Dd - is avoided.

Such an interaction with a finite number of lattice sites introduces
specific mathematical difficulties. One has to deduce a
weak equation for the limit density from {empirical
measures} (see Section \ref{sec:HL}) which converge weakly, cf.\ the 
discussion in \cite[Introduction]{Oel85}.
To be more
precise, we are  
modeling the cells/particles by an interacting particle system as
introduced in \cite{Spi70} and derive limit equations via a hydrodynamic
limit, analogous to the procedure in \cite{KL99}. Interacting particle systems 
in this sense are continuous-time Markov
jump processes which also involve discrete particles moving on a lattice. 
%
%

\medskip

So far we were not able to derive the full system \eqref{cells}, \eqref{CA} as
hydrodynamic limit. It seems to be
a major challenge to identify
suitable particle models with invariant product measures. This would be 
one option in order to prove a hydrodynamic limit for
systems of equations such as \eqref{cells} and \eqref{CA}. Hydrodynamic limits 
without product measures 
for single-species monotone particle systems where established in 
\cite{BGRS06, BGRS13}. Stochastically monotone systems preserve a partial order on state space over time, which allows the use of coupling techniques following first results in \cite{Rez91}. Monotonicity usually leads to homogeneous mass distributions, and indeed it has been shown recently that particle systems with invariant product measures that exhibit a particular form of clustering are necessarily non-monotone \cite{rafferty}. 
%
%
%
%

A generic example which
exhibits invariant product measures is given by the
\emph{zero-range process} (ZRP), where the jump rate of particles at any given
site depends only on the occupation number at this site. It is known, see 
\cite{GS03}, 
that multi-species ZRPs have invariant product measures if the jump rates of
particles satisfy certain symmetry relations, i.e. the \emph{Onsager relations}.
However, in our case this would not allow for purely diffusive motion of the 
chemo-attractant
in \eqref{CA}. Instead, the attractive chemical molecules would need to undergo
a kind of chemotactic motion too, in order for the Onsager relations to 
hold. From the modeling point of view this is definitely not the case
here.

Another method to derive a system of a PDE coupled to an ODE via a 
hydrodynamic limit 
without the use of invariant product
measures was introduced in \cite{DMLP}
%
%
for a specific two-species cellular system
on the lattice.  
The authors new method employs energy estimates for the mesoscopic 
empirical averages of the particles occupation number by which they
obtain $H_1^2$ a-priori bounds and thus are able to derive a substitute for
the two block estimates. For the most crucial terms in their setting
homogenization techniques are used, which play the role of the 
one block estimates.
For a discussion
of the respective particle model itself see also \cite{LT}. 
It would be interesting to see whether in our case one can 
find scaling regimes where the existence of gradients 
(and not only densities as in \cite{DMLP})
can rigorously be proved for the full limiting PDE system.

One typical
approximation of system \eqref{cells}, \eqref{CA} is to
assume a quasisteady chemo-attractant density, i.e.\ to set the left hand side
of \eqref{CA} equal to zero. Formally, this is justified by assuming that the
diffusivity of the chemo-attractant is much larger than the random motion of the 
cells,
see e.g. \cite{JL}.
%
There are also recent results on random walks in dynamic random environments driven by exclusion processes \cite{avena}, which are based on separation of time-scales argument. 
Otherwise there are only very few rigorous results on hydrodynamic limits for two-species systems, see e.g. \cite{FT04,EO10}, since such processes are in general also non-monotone.

%
%

It is obvious from a modeling point of view, how a full chemotaxis 
model for the behavior of particles
on a lattice could look like, compare e.g. \cite{OS}. One ansatz relates
to attractive reinforced random walks for many particles with diffusion 
and decay
of the attractive weight. Deriving a hydrodynamic limit for such, or similar
models would be the final goal. 
We expect the Keller-Segel model to also hold  in this context, but
a proof is still missing.\\

As a first step towards understanding chemotaxis as a hydrodynamic limit, we
therefore consider chemotaxis of cells in a stationary, but random, 
environment. 
In order to obtain a non-trivial macroscopic
chemotactic motion, we impose a slowly varying mean on the 
random chemical environment. 
There have been several studies of
hydrodynamic limits of particles in random media: The hydrodynamic limit for a 
ZRP a with stationary, ergodic environment on the
sites has been obtained in \cite{Kou99}, and corresponding large deviations have been considered in \cite{Kou2}. The case of a stationary, 
ergodic environment on the edges was treated
in \cite{GJ08}. The hydrodynamic limit for the ZRP with slowly varying but
not random 
environment is given in \cite{CR97}. For a result unifying all these situations 
of ``locally convergent'' media in the
special case of the simple exclusion processes compare \cite{Jar10}. Our main 
result in the present paper concerns the modeling of chemotaxis via
a ZRP in a random environment with slowly varying mean
and it is summarized in Theorem \ref{T}, Section \ref{sec:HL}. A special case of
this result is given in Corollary \ref{C} below.\\

At each site $x\in\T^d_N$ on the periodic lattice 
$\T^d_N = \{ 1, \ldots, N \}^d$ one distributes $\zeta(x)\in\N$ molecules
of the 
chemo-attractor such that $\zeta(x)$ is a Poisson random variable 
with parameter $ 
\vartheta(\frac{x}{N})$ and these are kept
fixed for all time. Let $f:\N\to [a,b]$ for some $a,b>0$ and set
\[
 \widetilde{f}\big( 
\vartheta({\textstyle\frac{x}{N}})\big) 
:= \E\bigg[ \frac{1}{f(\zeta(x))} \bigg]^{-1}.
\]
Now we specify the type of chemotactic motion we consider here for the cells.
We start with an initial distribution of $\eta(x)$ cells at $x\in\T^d_N$. All 
cells perform 
independent random walks with a site-dependent jump-rate which is 
$N^2 f(\zeta(x))$. Thus each cell remains at its current
site $x\in\T^d_N$ for an exponentially distributed random waiting time with parameter $N^2 f(\zeta(x))$ and then jumps
to a random neighboring site on the lattice. In this way, particles are more likely to stay at a site $x$ if
$f(\zeta(x))$ is large and therefore $f$ is one way of formulating the  
microscopic 
behavior which results in a chemotactic drift towards higher concentrations
of the chemical signal. To take the
limit as $N\to\infty$, we embed the discrete torus $\T^d_N$ into the continuous torus $\T^d = \R^d/\Z^d$ via $x\mapsto
\frac{x}{N}$. Thus the number of cells in an interval $I \subseteq \T^d$ is given by
\[
 \#\{ \text{cells in $I$} \} = \sum_{x\in N I \cap \T^d_N } \eta(x),
\]
where $N I$ is $I$ rescaled by $N$. 
The cell density in $I$ is then obtained as \\
$\sum_{x\in N I} \eta(x)/(|I| N^d)$.
%
%
\begin{corollary}\label{C}
 Let $\eta_0(x)$ be distributed with a profile $\rho_0(u)$, i.e.\ for each interval $I\subseteq \T^d$, the density
converges in probability:
\[
 \lim_{N\to\infty} \frac{\#\{ \text{cells in $I$} \}}{|I| N^d} = \int_{I} \rho_0(u)\;du.
\]
Then for all later times $t > 0$ and all intervals $I$, almost surely with respect to the distribution of chemical molecules $\zeta$, it holds that
\[
 \lim_{N\to\infty} \frac{\#\{ \text{cells in $I$} \}}{|I| N^d} = \int_{I} \rho_t(u)\;du
\]
in probability, where $\rho_t$ solves
\begin{equation}\label{cells_IRW}
 \partial_t \rho_t(u) = \Delta \big[\widetilde{f}(\vartheta(u))\rho_t(u)\big ] = \nabla\cdot\Big[
\widetilde{f}(\vartheta(u))\nabla\rho_t(u) 
+ \widetilde{f}'(\vartheta(u))\rho_t(u) \nabla\vartheta(u) \Big],
\end{equation}
which is of the form \eqref{cells}.
\end{corollary}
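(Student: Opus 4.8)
The plan is to obtain Corollary~\ref{C} as the instance of Theorem~\ref{T} in which the zero-range jump rate is the identity, $g(k)=k$. A system of $N^d$ independent continuous-time walks on $\T^d_N$, each waiting at $x$ an $\mathrm{Exp}(N^2 f(\zeta(x)))$ time and then jumping to a uniformly chosen nearest neighbour, is exactly the zero-range process with site disorder $\lambda(x):=f(\zeta(x))$ and jump rate $\lambda(x)\,g(\eta(x))=f(\zeta(x))\,\eta(x)$, run under the diffusive rescaling $N^2$. So the first step is a verification of hypotheses: $g(k)=k$ must satisfy the structural conditions on the rate function imposed in Theorem~\ref{T} — it vanishes only at $k=0$, is strictly increasing, and has uniformly bounded increments $g(k+1)-g(k)\equiv 1$, hence is Lipschitz, which is where the ``ellipticity-type'' condition on $g$ enters — while the assumption $f:\N\to[a,b]$ with $a,b>0$ guarantees $\lambda(x)\in[a,b]$, so the disorder is bounded away from $0$ and $\infty$ uniformly, as required for the Dirichlet-form and entropy estimates behind the block estimates.

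Second, one checks that the chemical field $\zeta$ is an admissible environment for Theorem~\ref{T}. The $\zeta(x)$ are independent with $\zeta(x)\sim\mathrm{Poisson}(\vartheta(x/N))$ for a fixed smooth positive profile $\vartheta$ on $\T^d$: independence trivialises the spatial ergodic averages needed in the one- and two-block steps, and smoothness of $\vartheta$ is precisely the ``slowly varying mean'' hypothesis, since on mesoscopic boxes around $Nu$ the empirical law of $\zeta$ converges to $\mathrm{Poisson}(\vartheta(u))$, so that for bounded $h$
\[
 \frac{1}{\ell^d}\sum_{|y-Nu|\le\ell}h(\zeta(y))\;\longrightarrow\;\E_{\mathrm{Poisson}(\vartheta(u))}\big[h(\zeta)\big]
\]
almost surely, on a set of realisations of $\zeta$ of full measure — this is the ``good set'' on which Theorem~\ref{T} operates and which yields the quenched statement. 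Granting these two points, Theorem~\ref{T} applies verbatim and gives, for a.e.\ $\zeta$ and every $t>0$, the claimed convergence in probability of the empirical cell density to $\rho_t$, where $\rho_t$ solves the hydrodynamic equation associated to the data $(g,\lambda,\vartheta)$, which has the form $\partial_t\rho=\Delta\,\Phi(\rho,u)$ with $\Phi(\rho,u)$ the local fugacity consistent with density $\rho$.

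The remaining step is to specialise $\Phi$ to $g(k)=k$. For the homogeneous zero-range process with rate $g(k)=k$ the grand-canonical measures $\bar\nu_\psi$ are $\mathrm{Poisson}(\psi)$, so the mean-rate/fugacity identity $\E_{\bar\nu_\psi}[g(\eta)]=\psi$ holds and the density--fugacity function is simply $R(\psi):=\E_{\bar\nu_\psi}[\eta]=\psi$. Hence the reversible product measure of the disordered process has $x$-marginal $\mathrm{Poisson}(\varphi/\lambda(x))$ for a single constant fugacity $\varphi$, with local mass $\varphi/f(\zeta(x))$; averaging over a mesoscopic box around $Nu$ and using the ergodic average above gives the macroscopic density--fugacity relation
\[
 \rho(u)\;=\;\varphi\,\E_{\mathrm{Poisson}(\vartheta(u))}\Big[\tfrac{1}{f(\zeta)}\Big]\;=\;\frac{\varphi}{\widetilde f(\vartheta(u))},\qquad\text{so}\qquad \varphi\;=\;\widetilde f(\vartheta(u))\,\rho(u),
\]
by the very definition of $\widetilde f$. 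Thus $\Phi(\rho,u)=\widetilde f(\vartheta(u))\,\rho$, and the hydrodynamic equation becomes $\partial_t\rho_t=\Delta\big[\widetilde f(\vartheta(u))\,\rho_t\big]$; expanding the Laplacian in divergence form gives exactly \eqref{cells_IRW}, which is of the type \eqref{cells} with $k(\rho,\vartheta)=\widetilde f(\vartheta)$ and $\widetilde\chi(\rho,\vartheta)=-\widetilde f'(\vartheta)$.

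Once Theorem~\ref{T} is in hand there is no serious obstacle left: the corollary is a matter of matching hypotheses and unwinding notation. The one point deserving attention is the first: since $g(k)=k$ is unbounded, one must make sure that the conditions under which Theorem~\ref{T} is proved genuinely accommodate linearly growing rates — they do, because only uniform boundedness of the \emph{increments} of $g$, not of $g$ itself, is used — and that it is exactly the bounds $a\le f\le b$ that render the entropy and block estimates uniform in the disorder. As an alternative route, since the particles here are genuinely independent, one could bypass Theorem~\ref{T} and argue directly: by a law of large numbers the empirical measure of the $N^d$ walks is governed by the single-particle transition probabilities, whose diffusive limit is the homogenisation of a random walk among the bounded site-traps $1/f(\zeta(x))$, a computation that again produces the harmonic-mean diffusivity $\widetilde f(\vartheta(u))$ and the same PDE.
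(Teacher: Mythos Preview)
Your approach is essentially the paper's own: specialise Theorem~\ref{T} to $g(k)=k$, observe that the one-site grand-canonical marginals are Poisson so that $M(\varphi)=\varphi$, compute $R(u,\varphi)=\varphi\,\E_{\mathrm{Pois}(\vartheta(u))}[1/f(\zeta)]$, invert to get $\Phi(u,\rho)=\widetilde f(\vartheta(u))\,\rho$, and read off the limiting PDE. Your verification of Assumption~\ref{A} for $g(k)=k$ and of the uniform bounds $a\le f\le b$ on the disorder is exactly right, and your alternative direct route via independence of the walks is a reasonable aside (not in the paper).

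Two points deserve flagging. First, you assert that Theorem~\ref{T} ``applies verbatim'', but the paper itself is more cautious here (see Section~\ref{sec:sim}): Theorem~\ref{T} requires the decomposition $p^N_x=v(x/N)+q_x$ with $(q_x)_{x\in\Z^d}$ a \emph{stationary} ergodic sequence fixed independently of $N$, whereas for the Poisson environment the centered part $f(\zeta(x))-\E_m[f(\zeta(x))]$ has a law depending on $x$ through $\vartheta(x/N)$, hence is independent but not stationary. The paper explicitly writes that ``our theory given before does not apply directly in this case'' and only argues heuristically that the same techniques should go through because $\vartheta$ is locally nearly constant. So the gap you paper over is one the authors themselves acknowledge; your proposal is no worse than the paper's, but you should not claim a verbatim application.

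Second, a small technical point you omit: the conclusion of Theorem~\ref{T} is weak convergence of the empirical measure against continuous test functions, whereas Corollary~\ref{C} is phrased in terms of densities over intervals $I$. The paper bridges this in Remark~\ref{R:CT} via the Portmanteau theorem, using absolute continuity of the limit $\rho_t(u)\,du$; you should include this step.
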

For the derivation of this result from Theorem \ref{T} we refer to the 
discussion in Section~\ref{sec:sim}. Let us first make
several remarks concerning this result.
\begin{remark}
(1) The assumption of independent random walks treated in Corollary \ref{C} yields a
linear limit equation and is much weaker than necessary, see Theorem \ref{T}. It is presented here in order to give the reader an
intuition for the connection between the macroscopic bias 
$\widetilde{f}(\vartheta(u))$ induced by the chemical signal and the 
related microscopic
bias $f(\zeta(x))$.\\
(2) Note that instead of the Poisson distribution, we can let the environment be given by any distribution in $\N$
depending continuously (in distributional topology) on its parameter.\\
(3) In case of a stationary chemical environment, the solution is always 
global 
and no blow-up can occur, thus excluding a
prominent feature of the full chemotaxis system, where the possibility 
of finite time blow-up in
two space dimensions for a suitable parameter range is of
important biological relevance for self-organization phenomena in the
cellular slime mold amoebae Dd, \cite{CP}, \cite{JL}.\\
(4) Finally, let us mention that some microscopic descriptions for chemotactic
cell motion 
are based on the feature that the cells can detect concentration gradients along 
their cell surface.
Our result supports the idea that chemotactic effects can
also occur if the cells only sense the absolute values of the 
concentration of the chemo-attractant without needing to sense its (local)
gradients. An alternative approach would be to let each particle perform asymmetric random walks (and its nonlinear
versions involving a zero range interaction $g(\cdot)$, see
below) with jump rates across edges of the lattice proportional to $p(x,y)$. 
Here $x$, $y$ are neighboring sites, and $p(x,y)$ is a random variable,
which in our
case is determined by the attractive chemical environment $\zeta(x)$., 
see also \cite{OS} and the microscopic approaches in \cite{Alt}.
A typical example for an asymmetric random walk would be 
$p(x,y)=f\big(\zeta(y)-\zeta(x)\big)$, representing a microscopic gradient
for an appropriate function $f$. This approach would lead us to 
consider problems of the type investigated in 
\cite{GJ08}. \\
(5) For positive chemotaxis effects, i.e. clustering/aggregation of the cells
the sign of the second term on the right hand side of (\ref{cells}), (\ref{cells_IRW}) is 
crucial.
Further, already in \cite{Schaaf} it was pointed out that the diffusivity
$\tilde f$, $k$ 
and the chemotactic sensitivity $ - \tilde f'$, $\tilde \chi$ of the cells 
are not completely independent 
functionals. 
A suggestion in \cite{Schaaf} is that the diffusivity $\tilde f$ 
in (\ref{cells_IRW}) relates to the
chemotactic sensitivity $ - \tilde f'$, 
which is supposed to have a positive sign
in case of positive chemotaxis, as follows: 

$ \tilde f' = - \chi_0 \tilde f \tilde\Phi'(\vartheta)$ for a suitable
function $\tilde\Phi$, e.g. $\tilde\Phi(\vartheta) = \vartheta$ and 
$\chi_0 > 0$. 

This results in $\tilde f = C \exp(- \chi_0 \tilde \Phi (\vartheta))$,
e.g. $\tilde f = C \exp ( - \chi_0 \vartheta)$.

So for large chemical
concentrations $\tilde f$ becomes small. 
\end{remark}
The outline of the paper is as follows. In Section \ref{sec:HL}, we present a general result on hydrodynamic limits in a
random environment with slowly varying mean. Its proof is given in 
Section \ref{sec:proof} where we highlight
the differences to the proofs found in \cite{Kou99, KL99}. Then in Section \ref{sec:sim}, we show how to deduce
Corollary \ref{C} from the general result and present numerical simulations of the particle system.

\section{The general result}
\label{sec:HL}
In this paper, we apply the \emph{entropy method} as given in 
\cite{GPV88} to obtain a generalization of equation 
\eqref{cells_IRW}, namely for the chemotactically moving cells, in a stationary, yet random, environment, which represents the density of a
heterogeneously distributed attractive chemical signal. To avoid 
technicalities concerning boundary conditions, we shall work in a
periodic domain. Thus we consider a Markov (Feller) process, describing the
motion of the cells, with state space $\N^{\T^d_N}$, where 
$\T^d_N = \{ 1, \ldots, N \}^d$ denotes the perodic lattice with $N+1 \equiv 1$.
Elements of the state space will be called \emph{particle configurations} and
denoted by the Greek letters $\eta, \zeta, \xi$. Thus $\eta(x) \in \N$ denotes the
number of particles of the configuration $\eta$ at site $x$. The elements of
$\T^d_N$ will be denoted by the letters $x,y,z$ and are called
\emph{microscopic} variables. Two microscopic sites $x, y\in \T^d_N$ are called
\emph{neighbors}, in short $x\sim y$, if $|x-y|=1$. The discrete torus is
embedded in the continuous torus $\T^d = \R^d/\Z^d$ via $x\mapsto x/N$. Elements
of the continuous torus will be denoted by $u$ and are called
\emph{macroscopic} variables. For simplicity, we consider only symmetric nearest-neighbor
jumps; then the Markov process is given by the generator
\begin{equation}\label{L}
 \mathcal{L}_N f(\eta) = \sum_{x\sim y} g(\eta(x)) p^N_x \big(f(\eta^{x,y}) - f(\eta) \big)
\end{equation}
for all $f \in C_b(\N^{\T^d_N})$, where the sum is taken over all (ordered) pairs of neighbors $x$ and $y$. Here
$\eta^{x,y}$ denotes the configuration
obtained from $\eta$ after one particle has jumped from site $x$ to $y$ and $p^N_x > 0$ 
describes the chemical environment. Throughout this article, we assume that 
\[
 p^N_x = v({\textstyle\frac{x}{N}}) + q_x,
\]
where $v \in C^1(\T^d, \R)$ describes the slowly varying mean of the environment and 
$( q_x )_{x\in\Z^d} \subset \R^{\Z^d}$ is a uniformly bounded \emph{stationary} and
\emph{ergodic} sequence of random variables with zero mean. In order to avoid degeneracies, we
assume strictly positive jump rates, i.e.\ without loss of generality we assume $p^N_x, q_x \in [a,b]$ with $0<a<b$.
We denote the law
of $q$ by $m$, so that $m$ is a probability measure on $\R^{\Z^d}$.\\
It is also possible to think of
$p^N_x$ as a random variable. Note, however, that for technical reasons in what follows we fix the ergodic part $q$ of
the environment independently of $N$.
The dependence of the jump rates on the number of cells is given by the function $g : \N \to
[0,\infty)$. In addition to the standard condition
\[
g(n)=0\quad\Leftrightarrow n=0
\]
to avoid degeneracies, we make the following regularity assumptions on $g$ throughout the paper.
\begin{assumption}\label{A}
 (i) Suppose that $g$ is uniformly Lipschitz-continuous, i.e.\ there exists a
constant $g^*$ such that
\[
 \sup_{n\in\N} |g(n+1) - g(n)| \le g^*.
\]
 (ii) Further assume that $g$ grows at least linearly, i.e.\ there
exists $g_0 > 0$ such that
\[
 \inf_{n\in\N} \frac{g(n)}{n} \ge g_0.
\]
\end{assumption}
These assumptions are not optimal but they are standard in the literature, see
e.g.\ \cite[Theorem 5.1.1]{KL99}, on which our proof is based.
Under these assumptions, the process with generator \eqref{L} has invariant
product measures $\nu^{N,p}$ that satisfy the \emph{detailed balance
condition}
\begin{multline*}
 \quad p^N_x g(n) \nu^{N,p}[\eta(x) = n] \nu^{N,p}[\eta(y) = k]\\
 = p^N_y g(k+1) \nu^{N,p}[\eta(x) = n-1] \nu^{N,p}[\eta(y) = k+1]
\end{multline*}
for all neighbors $x,y\in\T^d_N$ and $k,n\in\N$. 
\\
Indeed for any $\varphi \geq 0$, there exists such a measure
$\nu^{N,p}_\varphi$ given by 
\begin{equation}\label{GC}
 \nu^{N,p}_\varphi(\eta) = \prod_{x\in\T^d_N} \frac{\left[ (p^N_x)^{-1}
\varphi \right]^{\eta(x)}}{Z((p^N_x)^{-1} \varphi) g(\eta(x))!} \quad ,
\end{equation}
where $g(n)! = g(1) g(2) \ldots g(n)$, $g(0)! = 1$ and
\begin{equation}\label{Z}
 Z(\varphi) = \sum_{n=0}^\infty \frac{\varphi^n}{g(n)!}
\end{equation}
is the partition function. 
The detailed balance condition ensures that the
generator $\mathcal{L}_N$ in \eqref{L} is 
$L^2(\nu^{N,p}_\varphi)$-selfadjoint and in particular the measure
$\nu^{N,p}_\varphi$ is an invariant product measure. Let
$\nu^1_\varphi$ denote the one-site marginal without environment, i.e.
\[
 \nu^{1}_\varphi(n) =\frac{\varphi^n}{Z(\varphi) g(n)!} \quad .
\]
The associated density then is 
\begin{equation}\label{M_func}
 M(\varphi) = \E_{\nu^1_\varphi} [\eta(0)].
\end{equation}
Assumption \ref{A} implies that $Z$ is finite on $[0,\infty)$. Hence the product
measure $\nu^{N,p}_\varphi$ exists for all $\varphi \in [0,\infty)$ and environments $(p^N_x)_{x\in\T^d_N}$. 
The parameter $\varphi \geq 0$ is called fugacity, and it controls the expected particle density which is given by
\begin{equation}\label{R_func}
 R(u,\varphi) := \E_m\Big[M\Big( {\textstyle\frac{\varphi}{v(u) + q_0}}
 \Big)\Big] 
\end{equation}
for all $u\in\T^d$. The family of distributions (\ref{GC}) is also called the grand-canonical ensemble.
%
%
One can show that for each $u\in\T^d$, the function $R(u,\cdot)$ is strictly
increasing, cf.\ \cite{KL99}, and we then define $\Phi(u,\cdot)$ to be its inverse
function. Note that
\[
 \E_m\Big[\E_{\nu^{N,p}_{\Phi(\frac{x}{N},\rho)}}\big[\eta(0)\big]\Big] = \rho
\quad\text{and}\quad \E_m \Big[\E_{\nu^{N,p}_\varphi } \big[g(\eta(x)) p^N_x \big] \Big] =
\Phi({\textstyle\frac{x}{N}},\varphi).
\]
We shall see that the limit equation is given by
\begin{equation}\label{lim_eqn}
 \partial_t \rho_t(u) = \Delta \Phi(u,\rho_t(u)).
\end{equation}
 Finally we need a suitable notion of distance between probability measures, which in our case is 
given by the relative entropy. Consider measures $\nu, \mu \in P(\N^{\T^d_N})$ such that $\mu$ is absolutely
continuous with respect to $\nu$ (i.e.\ $\mu \ll \nu$). Then the relative entropy is given by
\[
 H(\mu|\nu) = \int_{\N^{\T^d_N}} \log\frac{d\mu}{d\nu}(\eta) \;d\mu(\eta),
\]
where $\frac{d\mu}{d\nu}$ denotes the Radon-Nikodym derivative of $\mu$ with respect to $\nu$.
The main result of this section is the following.
\begin{theorem}\label{T}
 Let $\mu_0^N \in P(\N^{\T^d_N})$ be the initial datum of the particle process
with an associated initial profile $\rho_0 \in L^\infty(\T^d)$, i.e.\ it holds
\[
 \lim_{N\to\infty} \mu_0^N\bigg( \Big|\frac{1}{N^d} \sum_{x\in\T^d_N}
G({\textstyle\frac{x}{N}}) \eta(x) - \int_{\T^d} G(u) \rho_0(u) \;du \Big| \geq
\delta \bigg) = 0 \qquad\text{$m$-almost surely}
\]
for all $G \in C(\T^d)$ and $\delta > 0$. Furthermore we suppose the bounds 
\[
 H(\mu_0^N | \nu^{N,p}_\varphi) \le C N^d \quad\text{and}\quad \E_{\mu_0^N}\bigg[
\sum_{x\in\T^d_N} \eta(x)^2 \bigg] \le C N^d
\]
to hold for some (and hence all) $\varphi > 0$.
Denote by $\mu^N_t \in P(\N^{\T^d_N})$ the measure obtained from the evolution
of the ZRP with rate function $g$ and by $\rho_t \in L^\infty(\T^d)$ the density
obtained from equation \eqref{lim_eqn}. Then under Assumption \ref{A}, it holds
that
\[
 \lim_{N\to\infty} \mu^N_t \bigg( \Big|\frac{1}{N^d} \sum_{x\in\T^d_N}
G({\textstyle\frac{x}{N}}) \eta(x) - \int_{\T^d} G(u) \rho_t(u) \;du \Big| \geq
\delta \bigg) = 0 \qquad\text{$m$-almost surely}
\]
for all $t > 0$, $G \in C(\T^d)$ and $\delta > 0$.
\end{theorem}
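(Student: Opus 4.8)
The plan is to run the entropy method of Guo--Papanicolaou--Varadhan \cite{GPV88}, in the form worked out for the zero-range process in \cite[Chapter~5]{KL99}, incorporating the random-environment modifications of \cite{Kou99}. Throughout, the process is taken in the diffusive time scale, i.e.\ with generator $N^2\mathcal L_N$, which is the scaling that produces $\Delta$ on the right-hand side of \eqref{lim_eqn}. First I would record two a priori bounds that propagate from the hypotheses: since $\nu^{N,p}_\varphi$ is invariant, the relative entropy is non-increasing along the evolution, so $H(\mu^N_t|\nu^{N,p}_\varphi)\le CN^d$ for all $t\ge 0$; and the second-moment bound $\E_{\mu^N_t}[\sum_x\eta(x)^2]\le C_T N^d$ on $[0,T]$ follows from the entropy bound together with the product structure of $\nu^{N,p}_\varphi$ and finiteness of $Z$ under Assumption~\ref{A} (via the entropy inequality). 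These two estimates, plus the bound on the Dirichlet form that also comes out of the entropy production, are what all subsequent steps consume.

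Next, for $G\in C^{1,2}([0,T]\times\T^d)$, I would write Dynkin's formula for the empirical measure $\pi^N_t:=N^{-d}\sum_{x\in\T^d_N}\eta_t(x)\,\delta_{x/N}$:
\[
\bangle{\pi^N_t,G_t}-\bangle{\pi^N_0,G_0}-\int_0^t\bangle{\pi^N_s,\d_s G_s}\,ds
-\int_0^t \frac1{N^d}\sum_{x}g(\eta_s(x))\,p^N_x\,(\Delta^N G_s)({\textstyle\frac xN})\,ds
= M^N_t,
\]
where $\Delta^N$ is the discrete Laplacian (so $\Delta^N G\to\Delta G$ uniformly) and $M^N_t$ is a martingale whose quadratic variation is $O(N^{-d})$ by a direct computation that uses the second-moment bound and the linear growth of $g$; hence $M^N_t\to 0$ in $L^2$. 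The core of the argument is then to replace the current-type functional $g(\eta(x))\,p^N_x$ by $\Phi\big(x/N,\rho^{\var}_s(x)\big)$, where $\rho^{\var}_s(x)=|B_{\var N}(x)|^{-1}\sum_{|y-x|\le \var N}\eta_s(y)$ is the empirical density in a macroscopically small box. This proceeds through a one-block and a two-blocks estimate. In the one-block estimate one replaces $g(\eta(x))\,p^N_x$, averaged over a microscopic box $\Lambda_\ell(x)$, by its grand-canonical conditional expectation given the box density; because $p^N_x=v(x/N)+q_x$, within a microscopic box $v$ is essentially frozen at $v(x/N)$ while the ergodic part $q$ has to be averaged out, so the limiting object is $\E_m\big[\E_{\nu^{N,p}_\varphi}[g(\eta(0))p^N_0]\big]=\Phi(x/N,\cdot)$ evaluated at the density — this is exactly the point where the arguments of \cite{Kou99} replace those of \cite{KL99}. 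The two-blocks estimate then upgrades $\ell\to\var N\to\infty$. Both are proved by the usual scheme: bound the time integral in question by the Dirichlet-form/entropy-production estimate, reduce to an equilibrium statement, and conclude from the equivalence of ensembles together with the ergodic theorem applied to the fixed realization of $q$. I expect this combined one-/two-block estimate in the presence of the slowly varying $v$ and the ergodic $q$ — and in particular making it hold $m$-almost surely with $q$ fixed independently of $N$ — to be the main technical obstacle.

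With the replacement in hand, tightness of $(\pi^N_\cdot)$ in $D([0,T],\mathcal M_+(\T^d))$ follows from the Dynkin formula and the a priori bounds, and the second-moment bound forces every limit point to be of the form $\rho_t(u)\,du$ with $\rho\in L^\infty([0,T]\times\T^d)$. Passing to the limit in the Dynkin identity along a convergent subsequence, using $M^N_t\to 0$ and the replacement, shows that any limit point $\rho$ is a weak solution of $\partial_t\rho_t=\Delta\Phi(u,\rho_t)$ with initial datum $\rho_0$. Since $R(u,\cdot)$ is strictly increasing, $\Phi(u,\cdot)$ is nondecreasing; since $v\in C^1$ and $p^N_x\in[a,b]$, the map $u\mapsto\Phi(u,\cdot)$ is regular with values in a bounded range controlled by $a,b$ and Assumption~\ref{A}. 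Hence the weak solution of \eqref{lim_eqn} with $L^\infty$ initial data is unique, by a standard duality/doubling-of-variables argument for quasilinear parabolic equations with monotone nonlinearity. Uniqueness identifies all subsequential limits, so the full sequence $\pi^N_\cdot$ converges in probability to $\rho_t(u)\,du$; testing against a fixed $G\in C(\T^d)$ for fixed $t>0$ gives precisely the asserted convergence, and the $m$-almost sure qualifier is inherited from the block estimates.
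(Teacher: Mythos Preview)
Your proposal is correct and follows essentially the same route as the paper: entropy/Dirichlet a priori bounds, tightness and absolute continuity of limit points, Dynkin's formula with a vanishing martingale, and identification of the limit via a Replacement Lemma (one-block plus two-blocks) that combines the Koukkous treatment of the ergodic part $q$ with freezing of the slowly varying mean $v$ on microscopic boxes, followed by uniqueness of the weak solution of \eqref{lim_eqn}. You have also correctly located the main technical work in the block estimates, which is precisely where the paper expends most of its effort.
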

\begin{remark}\label{R:CT}
 In order to fully connect Corollary \ref{C} with Theorem \ref{T}, we note that weak convergence of the empirical
measures
implies convergence of the densities over intervals by the Portmanteau theorem \cite[Theorem 2.1]{Bil99}.
This follows from the absolute continuity (with respect to the Lebesgue measure) of the limit measure $\rho_t(u)\;du$.
\end{remark}
To prove this result, we adapt the method in \cite{Kou99} to a random
environment with slowly varying mean. The paper \cite{Kou99} is based on the
entropy method given in \cite{GPV88} and proves a hydrodynamic limit for a zero range
process in a stationary (ergodic) random environment. The case of a zero range
process in a slowly varying deterministic environment was treated in 
\cite{CR97}. Thus, from a purely technical point of view, our result
is a combination of the previous two. Indeed, our method is close to the
methods presented in \cite{Kou99}. Therefore we mainly highlight the 
differences here. The basic idea of the proof lies
in the fact that the slowly varying mean is locally almost constant and 
hence in
small boxes we are essentially in the situation of \cite{Kou99}.

\section{Proof of Theorem \ref{T}}\label{sec:proof}
The first step of the proof is to obtain a priori bounds and tightness of the
particle process. Indeed we can understand the convergence result of Theorem \ref{T} in terms of the empirical measure
\[
 \alpha^N_\eta(du) = \frac{1}{N^d} \sum_{x\in\T^d_N} \eta(x) \delta_{\frac{x}{N}}(du) \in \mathcal{M}_+,
\]
where $\delta_u$ denotes a Dirac delta, $u \in \T^d$, and $\mathcal{M}_+$ is the space of all positive Radon measures on
$\T^d$.
For all $t\geq0$, the configuration $\eta_t$ is a random variable distributed according to $\mu^N_t$. Note that it
suffices to prove Theorem \ref{T} for all $0\le t \le T$ with a fixed, but arbitrary, $T > 0$. Since for any
environment $p$ the ZRP is a jump process on the state space, $(\eta_t)_{t\in[0,T]}$ is a random variable on the path
space $D([0,T];\N^{\T^d})$. Here $D([0,T];\N^{\T^d})$ denotes the space of functions $[0,T]\to\N^{\T^d}$ that are
right-continuous in time with left limits. We denote the distribution of $(\eta_t)_{t\in[0,T]}$ by $\mu^N$. In the same
vein, $(\alpha^N_{\eta_t})_{t\in[0,T]}$ is a random variable on the path space
$D([0,T];\mathcal{M}_+)$; let $Q^{N,p}$ denote its distribution. The entropy dissipation of $\mu^N_t$ can be defined in
terms of the Dirichlet form
\[
 \mathcal{D}(\mu^N_t | \nu^{N,p}_\varphi) := \int_{\N^{\T^d_N}}
\sqrt{\frac{d\mu^N_t}{d\nu^{N,p}_\varphi}} \mathcal{L}_N
\sqrt{\frac{d\mu^N_t}{d\nu^{N,p}_\varphi}} \;d\nu^{N,p}_\varphi.
\]
Throughout this paper we suppose that the assumptions of Theorem \ref{T} are satisfied.
\begin{lemma}\label{L:a-priori}
 It holds that
\[
 H(\mu^N_t | \nu^{N,p}_\varphi) \le C N^d \ \text{for all}\ t\in [0,T]\quad \text{and} \quad \frac{1}{T} \int_0^T \mathcal{D}(\mu^N_t | \nu^{N,p}_\varphi) \;dt \le C N^d.
\]
\end{lemma}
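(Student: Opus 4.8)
The plan is to follow the standard relative-entropy / entropy-dissipation argument of \cite[Chapter 5]{KL99} and \cite{GPV88}, with the only subtlety being the presence of the site-dependent rates $p^N_x = v(x/N) + q_x$ in the reference product measure $\nu^{N,p}_\varphi$. First I would recall that the entropy production identity gives
\[
 \partial_t H(\mu^N_t | \nu^{N,p}_\varphi) = N^2 \int_{\N^{\T^d_N}} \mathcal{L}_N^* \Big(\frac{d\mu^N_t}{d\nu^{N,p}_\varphi}\Big)\, \log\frac{d\mu^N_t}{d\nu^{N,p}_\varphi}\; d\nu^{N,p}_\varphi \le -N^2\, \mathcal{D}(\mu^N_t | \nu^{N,p}_\varphi),
\]
where the factor $N^2$ comes from the diffusive rescaling in \eqref{L}, and where I use that $\nu^{N,p}_\varphi$ is reversible (by the detailed balance condition stated before the theorem), so that the antisymmetric part drops and the symmetric part is exactly $-N^2\mathcal{D}$. (The precise normalisation of $\mathcal{D}$ relative to the jump rates is as in \cite{KL99}; I would absorb constants into $C$.) The key point is that $\mathcal{D}(\mu^N_t|\nu^{N,p}_\varphi) \ge 0$ always, so $t\mapsto H(\mu^N_t|\nu^{N,p}_\varphi)$ is non-increasing. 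Combined with the hypothesis $H(\mu^N_0|\nu^{N,p}_\varphi) \le CN^d$ from the statement of Theorem \ref{T}, this immediately yields the first bound $H(\mu^N_t|\nu^{N,p}_\varphi)\le CN^d$ for all $t\in[0,T]$.

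For the second bound I would integrate the entropy production inequality in time: since $H\ge 0$,
\[
 N^2 \int_0^T \mathcal{D}(\mu^N_t|\nu^{N,p}_\varphi)\,dt \le H(\mu^N_0|\nu^{N,p}_\varphi) - H(\mu^N_T|\nu^{N,p}_\varphi) \le H(\mu^N_0|\nu^{N,p}_\varphi) \le CN^d.
\]
Dividing by $N^2 T$ gives $\frac{1}{T}\int_0^T \mathcal{D}(\mu^N_t|\nu^{N,p}_\varphi)\,dt \le \frac{C}{N^2}N^d \le C'N^d$ (indeed with a gain of $N^{-2}$, but the stated bound $CN^d$ is all that is needed downstream). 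One should be slightly careful that the entropy production formula is valid — this requires knowing $\mu^N_t \ll \nu^{N,p}_\varphi$ for all $t$, which follows since $\mu^N_0\ll\nu^{N,p}_\varphi$ (finite relative entropy) and the semigroup generated by $\mathcal{L}_N$ preserves absolute continuity with respect to a reversible measure on the countable-but-infinite state space $\N^{\T^d_N}$; here the linear-growth assumption \ref{A}(ii) together with the second moment bound $\E_{\mu_0^N}[\sum_x \eta(x)^2]\le CN^d$ guarantees non-explosion of the jump process and justifies the differentiation of the entropy.

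The main obstacle is not the algebra but the justification that these manipulations are legitimate on an infinite (countable) state space: one must verify that $H(\mu^N_t|\nu^{N,p}_\varphi)$ is finite for $t>0$, that $t\mapsto H(\mu^N_t|\nu^{N,p}_\varphi)$ is absolutely continuous with the derivative identity above, and that the reference measure $\nu^{N,p}_\varphi$ — which now carries the environment through $(p^N_x)^{-1}\varphi$ in \eqref{GC} — is genuinely reversible for $\mathcal{L}_N$. The latter is exactly the content of the detailed balance computation displayed before the theorem, so the environment causes no new difficulty here: $\nu^{N,p}_\varphi$ plays precisely the role the homogeneous product measure plays in \cite{KL99}, and the rates $p^N_x\in[a,b]$ are uniformly bounded above and below. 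I would therefore simply cite \cite[Proposition 5.1.1 and the surrounding discussion]{KL99} (or the analogous statement in \cite{Kou99}) for the rigorous entropy-production bound, noting that the proof there goes through verbatim with $\nu^{N,p}_\varphi$ in place of the translation-invariant product measure, since it uses only reversibility, the explicit product form, and the non-explosion guaranteed by Assumption \ref{A} and the moment bound.
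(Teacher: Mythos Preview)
Your proposal is correct and matches the paper's approach: the paper does not give a detailed proof of Lemma~\ref{L:a-priori} but simply refers to the corresponding results in \cite{Kou99} and \cite[Chapter~V]{KL99}, which is exactly the entropy--production argument you outline, and your observation that the inhomogeneous environment enters only through the reversible reference measure $\nu^{N,p}_\varphi$ (with $p^N_x\in[a,b]$ uniformly) is precisely why the standard proof goes through verbatim. Your remark that the argument actually yields the sharper bound $\int_0^T \mathcal{D}\,dt \le CN^{d-2}$ is also on point---this is exactly the estimate the paper records later in \eqref{ap-bounds} and uses in the one-block estimate.
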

\begin{lemma}
 For any environment $p$, the sequence of probability measures
$(Q^{N,p})_{N\in\N}$ is tight.
\end{lemma}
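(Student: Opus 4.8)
The plan is to apply the standard criterion for tightness of laws on $D([0,T];\mathcal{M}_+)$ (see \cite[Chapter 4]{KL99}): it suffices to check that for every fixed $t$ the family $(Q^{N,p}_t)$ of time-$t$ marginals is tight in $\mathcal{M}_+$, and that the paths satisfy an Aldous-type oscillation estimate after pairing against a dense set of test functions. Since $\mathcal{M}_+$ with the vague topology is metrized so that tightness reduces to control of total mass on the compact torus, and since $\T^d$ itself is compact, the first point follows immediately from the a priori bound: by Lemma \ref{L:a-priori} and the entropy inequality, $\E_{\mu^N_t}\big[N^{-d}\sum_{x}\eta(x)\big]$ is bounded uniformly in $N$ and $t\in[0,T]$, so $\alpha^N_{\eta_t}(\T^d)$ is tight by Markov's inequality; this gives relative compactness of the marginals in the vague topology. (One also records the second-moment bound $\E_{\mu^N_t}[\sum_x \eta(x)^2]\le CN^d$, propagated from the initial datum, which will be used in the oscillation estimate.)

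Next I would reduce path-regularity to a family of one-dimensional processes: for each $G\in C^2(\T^d)$ consider $X^{N,G}_t := \langle \alpha^N_{\eta_t}, G\rangle = N^{-d}\sum_{x\in\T^d_N} G(\tfrac{x}{N})\eta(x)$. By Mitoma's criterion it is enough to prove that each real-valued process $(X^{N,G}_t)_{t\in[0,T]}$ is tight in $D([0,T];\R)$. For this I use the Dynkin martingale decomposition
\[
 X^{N,G}_t = X^{N,G}_0 + \int_0^t \mathcal{L}_N X^{N,G}_s\,ds + M^{N,G}_t,
\]
where $M^{N,G}$ is a martingale. A direct computation from \eqref{L} shows $\mathcal{L}_N X^{N,G}_\eta = N^{-d}\sum_{x} \Delta_N G(\tfrac{x}{N})\, g(\eta(x))\,p^N_x$, where $\Delta_N$ is the discrete Laplacian normalised by the $N^2$ speed-up; since $\Delta_N G \to \Delta G$ uniformly, $\|\Delta_N G\|_\infty$ is bounded, and by Assumption \ref{A}(i) $g(\eta(x)) \le g^* \eta(x) + g(0)'$-type linear bound together with $p^N_x\le b$, the drift term has time-derivative bounded in $L^1(\mu^N_t)$ uniformly in $N$, hence is uniformly Lipschitz in $t$ in probability and automatically tight. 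For the martingale part one computes the quadratic variation
\[
 \langle M^{N,G}\rangle_t = \int_0^t \Big( \mathcal{L}_N (X^{N,G}_s)^2 - 2 X^{N,G}_s \mathcal{L}_N X^{N,G}_s \Big)\,ds,
\]
which equals $N^{2-2d}\int_0^t \sum_{x\sim y} g(\eta_s(x))\,p^N_x\,\big(G(\tfrac{y}{N})-G(\tfrac{x}{N})\big)^2\,ds$; using $|G(\tfrac{y}{N})-G(\tfrac{x}{N})| \le \|\nabla G\|_\infty/N$, the linear growth bound on $g$, and the first-moment bound, this is $O(N^{-d})\cdot t \to 0$. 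Thus $M^{N,G}_t \to 0$ in $L^2$ uniformly on $[0,T]$, so in particular $M^{N,G}$ is tight. By Aldous' criterion applied through the decomposition (control of modulus of continuity of the drift plus vanishing of the martingale), each $X^{N,G}$ is tight in $D([0,T];\R)$, and assembling over a countable dense set of $G$ in $C^2(\T^d)$ gives tightness of $(Q^{N,p})_N$ in $D([0,T];\mathcal{M}_+)$.

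The only genuinely delicate point is bookkeeping the environment factor $p^N_x = v(\tfrac{x}{N}) + q_x$: since it is bounded in $[a,b]$ uniformly in $N$ and in the randomness, it enters all the estimates above merely as a harmless bounded multiplicative weight, so no ergodic or homogenisation input is needed at this stage — that machinery is postponed to the identification of limit points. Hence the proof requires nothing beyond the a priori bounds of Lemma \ref{L:a-priori}, the linear growth of $g$ from Assumption \ref{A}, and the smoothness of test functions; I expect the main (still routine) obstacle to be verifying the Aldous oscillation bound for the drift term, where one splits the time increment and uses the uniform $L^1$-bound on $\mathcal{L}_N X^{N,G}$ together with a conditioning argument, exactly as in \cite[Chapter 4]{KL99} or \cite{Kou99}.
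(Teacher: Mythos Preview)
Your proposal is correct and follows precisely the standard tightness argument that the paper invokes: the paper does not give its own proof here but simply refers to Lemma~2.1--2.4 in \cite{Kou99} and Lemma~V.1.5--V.1.6 in \cite{KL99}, and your Dynkin decomposition with the discrete-Laplacian drift, the $O(N^{-d})$ quadratic-variation bound, and the observation that $p^N_x\in[a,b]$ enters only as a bounded weight is exactly that argument. The only cosmetic slip is that the second-moment bound you announce is never actually used --- the first-moment control coming from $g(n)\le g^* n$ and the entropy inequality suffices throughout --- but this does not affect correctness.
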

\begin{lemma}\label{L:abs_cont}
 For any environment $p$, all limit points $Q^{*,p}$ of $(Q^{N,p})_{N\in\N}$ are
supported on $\{ \pi \in D([0,T];\mathcal{M}_+(\T^d)) : \pi_t (du) \ll du \}$, i.e.\ at each time $t$, the limit measures
are absolutely-continuous with respect to the Lebesgue measure.
\end{lemma}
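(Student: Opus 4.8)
The plan is to show that any limit point $Q^{*,p}$ of $(Q^{N,p})_N$ charges only measures that, at each time, are absolutely continuous with respect to Lebesgue measure, and in fact have a uniformly bounded density. First I would use the second a priori bound in the hypotheses of Theorem \ref{T}, namely $\E_{\mu_0^N}\big[\sum_x \eta(x)^2\big]\le CN^d$, together with Lemma \ref{L:a-priori}, to propagate a second-moment (or more conveniently an entropy-type) bound to all times $t\in[0,T]$: since $\nu^{N,p}_\varphi$ is invariant and the relative entropy $H(\mu^N_t\mid\nu^{N,p}_\varphi)\le CN^d$, the standard entropy inequality gives, for every $G\in C(\T^d)$ with $G\ge 0$ and every $\gamma>0$,
\[
\E_{\mu^N_t}\Big[\frac1{N^d}\sum_{x\in\T^d_N} G({\textstyle\frac xN})\eta(x)\Big]
\le \frac1{\gamma N^d}\log\E_{\nu^{N,p}_\varphi}\Big[\exp\big(\gamma\textstyle\sum_x G({\textstyle\frac xN})\eta(x)\big)\Big] + \frac{C}{\gamma}.
\]
Under Assumption \ref{A}(ii), $g$ grows at least linearly, so the one-site marginals $\nu^1_\varphi$ have exponential moments uniformly in the environment (here the bounds $p^N_x\in[a,b]$ are used), and the logarithm on the right is bounded by $N^d\,\|G\|_\infty\,C(\gamma)$ for $\gamma$ small enough, with $C(\gamma)\to 0$ as $\gamma\to0$. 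Optimising over $\gamma$ yields a bound of the form $\E_{\mu^N_t}\big[\frac1{N^d}\sum_x G(\frac xN)\eta(x)\big]\le C\|G\|_\infty$ uniformly in $N$ and $t$.

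Next I would transfer this to the empirical measure. The displayed estimate says precisely that $\E_{Q^{N,p}}\big[\langle \pi_t, G\rangle\big]\le C\|G\|_\infty$ for all nonnegative $G\in C(\T^d)$, where $\langle\pi_t,G\rangle=\int_{\T^d}G\,d\pi_t$. Passing to a limit point $Q^{*,p}$ along a subsequence, using that $\pi\mapsto\langle\pi_t,G\rangle$ is continuous and bounded on $D([0,T];\mathcal{M}_+)$ for fixed $t$ (a standard point to check, or one integrates in $t$ against a smooth test function to stay with genuinely continuous functionals and then removes the time averaging by right-continuity), we get $\E_{Q^{*,p}}\big[\langle\pi_t,G\rangle\big]\le C\|G\|_\infty$ for all such $G$. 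By monotone/dominated convergence this extends to all nonnegative bounded Borel $G$, and in particular, taking $G=\mathbf 1_A$ for a Borel set $A\subseteq\T^d$, we obtain $\E_{Q^{*,p}}\big[\pi_t(A)\big]\le C\,|A|$. Hence for $Q^{*,p}$-almost every path $\pi$ and a.e.\ $t$, $\pi_t(A)\le C'|A|$ — first for sets of the form given by a countable generating algebra, then for all Borel sets — which forces $\pi_t(du)\ll du$ with Radon--Nikodym derivative bounded by $C'$. A Fubini argument over $(t,\pi)$ upgrades ``a.e.\ $t$'' to ``all $t$'' after noting that $t\mapsto\pi_t$ is right-continuous in the vague topology, so $\pi_t(A)\le C'|A|$ for all $t$ once it holds on a dense set of times.

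The step I expect to be the main obstacle is the uniform-in-environment exponential moment bound needed in the first paragraph: one must make sure the constants coming from $\log\E_{\nu^{N,p}_\varphi}[\exp(\gamma\sum_x G(\frac xN)\eta(x))]$ do not blow up as $N\to\infty$, which requires the partition function $Z((p^N_x)^{-1}\varphi e^{\gamma G(x/N)})$ to stay finite and bounded away from its radius of convergence. This is where Assumption \ref{A}(ii) (linear growth of $g$, giving an infinite radius of convergence for $Z$) and the uniform bounds $a\le p^N_x\le b$ enter decisively; the slowly varying mean $v$ and the ergodic part $q$ play no essential role here beyond these bounds, so the argument is, as the authors note, essentially identical to \cite[Section 5.3]{KL99} and \cite{Kou99}, and I would simply indicate the adaptation rather than reproduce it. A secondary technical point is the continuity/measurability of $\pi\mapsto\langle\pi_t,G\rangle$ at a fixed time on Skorokhod space, which is handled in the usual way by first integrating against a smooth temporal test function and then using right-continuity of $t\mapsto\pi_t$.
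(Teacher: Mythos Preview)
Your argument has two genuine gaps.

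First, from the bound $\E_{Q^{*,p}}[\langle\pi_t,G\rangle]\le C\|G\|_\infty$ you cannot deduce $\E_{Q^{*,p}}[\pi_t(A)]\le C|A|$ by taking $G=\mathbf 1_A$: the supremum norm of an indicator is $1$, not $|A|$, so your inequality yields only $\E_{Q^{*,p}}[\pi_t(A)]\le C$, which is nothing more than finiteness of expected total mass and carries no information about absolute continuity. The entropy inequality \emph{can} be made to produce $\int G\,du$ on the right-hand side rather than $\|G\|_\infty$ --- the point is that the one-site cumulant generating function $\Lambda_x(\theta)=\log\E_{\nu^1_{\varphi/p^N_x}}[e^{\theta\eta(0)}]$ is convex with $\Lambda_x(0)=0$, so for $0\le G\le 1$ one has $\Lambda_x(\gamma G(x/N))\le G(x/N)\,\Lambda_x(\gamma)$ and hence the log-Laplace term is controlled by $\frac{1}{N^d}\sum_x G(x/N)\to\int G\,du$ --- but this step is absent from your computation, and your ``optimisation over $\gamma$'' (with $C/\gamma$ on one side and a term vanishing as $\gamma\to 0$ on the other) does not in fact give a bound proportional to $\|G\|_\infty$ alone.

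Second, and independently fatal, even a correct bound $\E_{Q^{*,p}}[\pi_t(A)]\le C|A|$ for all Borel $A$ would not yield almost-sure absolute continuity of $\pi_t$. Controlling the intensity measure does not control realizations: if $\pi_t=\delta_U$ with $U$ uniform on $\T^d$, then $\E[\pi_t(A)]=|A|$ holds exactly, yet every sample is a Dirac mass. The passage ``Hence for $Q^{*,p}$-almost every path $\pi$ and a.e.\ $t$, $\pi_t(A)\le C'|A|$'' is therefore a non-sequitur. The argument to which the paper defers (\cite[Chapter~5]{KL99}, \cite{Kou99}) does not go through an expectation bound on $\pi_t(A)$; one instead establishes, for each nonnegative $G\in C(\T^d)$ and each $\delta>0$, that $\mu^N_t\big(\langle\pi^N,G\rangle> K_\gamma\int G\,du + C/\gamma+\delta\big)\to 0$, by combining the entropy bound $H(\mu^N_t|\nu^{N,p}_\varphi)\le CN^d$ with an exponential (large-deviation) estimate under the product reference measure $\nu^{N,p}_\varphi$. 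This gives an almost-sure inequality in the limit, and intersecting over a countable family of test functions and times then forces $\pi_t\ll du$.
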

\begin{lemma}\label{L:weak}
 For any environment $p$, if $\pi_t (du) = \rho_t (u) \;du$ is distributed
according to $Q^{*,p}$, then $\rho_t(u)$ is the unique solution of 
\eqref{lim_eqn} in $L^2((0,T)\times\T^d)$.
\end{lemma}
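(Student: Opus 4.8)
The plan is to follow the standard entropy-method route to the hydrodynamic equation, adapted to the slowly varying random environment. The first ingredient is Dynkin's formula: for $G \in C^{1,2}([0,T]\times\T^d)$, the process
\[
 M^{N,G}_t = \bangle{\alpha^N_{\eta_t}, G_t} - \bangle{\alpha^N_{\eta_0}, G_0} - \int_0^t \bangle{\alpha^N_{\eta_s}, \partial_s G_s}\,ds - \int_0^t \mathcal{L}_N \bangle{\alpha^N_{\eta_s}, G_s}\,ds
\]
is a martingale, and a variance computation (using Lemma \ref{L:a-priori} and the linear growth in Assumption \ref{A}(ii)) shows $\E[(M^{N,G}_t)^2] \to 0$. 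Computing $\mathcal{L}_N \bangle{\alpha^N_\eta, G}$ produces, after a discrete integration by parts, a term of the form $N^{-d}\sum_x \tfrac12\Delta_N G(\tfrac{x}{N})\, g(\eta(x)) p^N_x$ plus an error that vanishes because $G$ is smooth (here the slow variation of $v$ and the $C^1$ regularity are used to replace $p^N_x$-weighted discrete Laplacians by $\Delta G$). So up to vanishing errors, one must identify the limit of $\int_0^t N^{-d}\sum_x \tfrac12\Delta G(\tfrac{x}{N}) g(\eta(x)) p^N_x\, ds$.

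The heart of the matter is the replacement lemma: one replaces the microscopic current-type quantity $g(\eta(x)) p^N_x$ by $\Phi(\tfrac{x}{N}, \rho^{N,\epsilon}_s(x))$, where $\rho^{N,\epsilon}_s(x)$ is the empirical density averaged over a box of macroscopic size $\epsilon$ around $x$. This proceeds in the usual two stages. The \emph{one-block estimate} replaces $g(\eta(x))p^N_x$ by its grand-canonical expectation at the empirical density of a microscopic box of size $\ell$; because the interaction among cells is purely on-site and the environment $(q_x)$ is stationary and ergodic, the box is — modulo the slow variation of $v$, which is essentially constant on a microscopic box — exactly in the setting of \cite{Kou99}, and the key identity $\E_m\E_{\nu^{N,p}_\varphi}[g(\eta(x))p^N_x] = \Phi(\tfrac{x}{N},\varphi)$ (stated in the excerpt) furnishes the limiting nonlinearity after averaging against the ergodic environment via Birkhoff's theorem. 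The \emph{two-block estimate} then upgrades the microscopic box of size $\ell$ to a macroscopic box of size $\epsilon$; this uses the entropy and Dirichlet-form bounds of Lemma \ref{L:a-priori} together with the spectral gap / local equilibrium machinery exactly as in \cite[Ch.\ 5]{KL99}. I expect \textbf{this replacement step — in particular making the one-block estimate uniform in the frozen value of the slowly varying mean $v$, and correctly pairing the ergodic average over $q$ with the local average over the box — to be the main obstacle}, since it is where the random environment and the slow variation interact; but since $v$ is $C^1$ and the box is microscopically small it is locally almost constant, so the argument of \cite{Kou99} applies on each box with uniformly controlled error, as indicated in the remark following Theorem \ref{T}.

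With the replacement in hand, any limit point $Q^{*,p}$ concentrates on paths $\pi_t(du) = \rho_t(u)\,du$ (absolute continuity is Lemma \ref{L:abs_cont}) satisfying, for all smooth $G$ and all $t$,
\[
 \bangle{\rho_t, G_t} - \bangle{\rho_0, G_0} = \int_0^t \bangle{\rho_s, \partial_s G_s}\,ds + \int_0^t \bangle{\Phi(\cdot,\rho_s), \tfrac12\Delta G_s}\,ds,
\]
which is precisely the weak formulation of \eqref{lim_eqn} with the initial profile $\rho_0$. Since $\rho_t$ is bounded (from the second-moment bound in Theorem \ref{T}, passed to the limit) and $\Phi(u,\cdot)$ is Lipschitz on bounded sets, the weak solution in $L^2((0,T)\times\T^d)$ is unique by a standard energy estimate for this quasilinear parabolic equation (see \cite{KL99}); hence $Q^{*,p}$ is the Dirac mass on this unique solution, giving the claim.
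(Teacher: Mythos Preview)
Your plan is correct and follows the same route as the paper: Dynkin's martingale plus a discrete integration by parts, followed by the Replacement Lemma (split into one-block and two-block estimates, with the slowly varying mean frozen on microscopic boxes so that the \cite{Kou99} argument applies locally), and finally uniqueness of the weak solution as in \cite{KL99}. The only points worth flagging are cosmetic: the factor $\tfrac12$ in your weak formulation does not match the paper's convention for \eqref{lim_eqn}, and the ``Birkhoff'' justification you sketch for the one-block estimate is in practice implemented via the Koukkous discretization of the environment values into intervals $I^\delta_j$ (the sets $A^{l,k,\delta}_{x,\alpha}$ in the paper), which is what produces the needed uniformity over densities, boxes, and the macroscopic variable $u$.
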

The proofs of Lemma \ref{L:a-priori} -- \ref{L:weak} are similar to the proofs of the corresponding 
%
%
results for the usual ZRP, see Lemma 2.1 -- 2.4 in \cite{Kou99} and Lemma V.1.5 and V.1.6 in \cite{KL99}. 
Once these results have been obtained, Theorem \ref{T} is a straight-forward consequence. The main
difficulty consists in the proof of the so-called replacement lemma, which is the key to
Lemma \ref{L:weak}. In order to state the replacement lemma, we need
some additional notation to describe averages over mesoscopic blocks. Set
\[
 \eta^l(x) = \frac{1}{(2l+1)^d} \sum_{|x-y|\le l} \eta(y)
\]
and
\[
 V_{x,l}^p(\eta) := \bigg| \frac{1}{(2l+1)^d} \sum_{|x-y|\le l} p^N_y g(\eta(y))
-\Phi({\textstyle\frac{x}{N}},\eta^l(x)) \bigg|.
\]
\begin{lemma}[Replacement Lemma]\label{L:RL}
 For every $\delta > 0$, $m$-almost surely it holds that
\[
 \limsup_{\epsilon\to0} \limsup_{N\to\infty} \mu^N \bigg( \int_0^T
\frac{1}{N^d} \sum_{x\in\T^d_N} V_{x,\epsilon N}^{p}(\eta_t) \;dt \geq \delta
\bigg) = 0,
\]
where $\tau_x$ denotes the translation by $x\in\T^d_N$.
\end{lemma}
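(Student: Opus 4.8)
The plan is to follow the classical two-step strategy for the replacement lemma — the one-block estimate followed by the two-block estimate — but carried out locally in mesoscopic boxes where the slowly varying mean $v(\cdot)$ is nearly constant, so that within each such box we are essentially in the stationary ergodic situation of \cite{Kou99}. First I would reduce the claim to a statement about the static object $V_{x,l}^p$ by the standard entropy-plus-Dirichlet-form argument: using the a priori bound from Lemma \ref{L:a-priori}, the expectation $\mu^N\big(\int_0^T \frac1{N^d}\sum_x V_{x,\epsilon N}^p(\eta_t)\,dt\big)$ is controlled, via the entropy inequality, by $C\big(\frac1{N^d} H(\mu_0^N|\nu^{N,p}_\varphi) + \text{(integral of the Dirichlet form)}\big)$ plus a term of the form $\sup_f\big\{\int V^p_{x,\epsilon N}\,f\,d\nu^{N,p}_\varphi - \frac{N^2}{C}\mathcal D(f\,\nu^{N,p}_\varphi|\nu^{N,p}_\varphi)\big\}$, reducing everything to controlling $V^p_{x,l}$ in equilibrium with a Dirichlet-form penalty. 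Then I would split $l = \epsilon N$ into an intermediate scale $k$ and decompose
\[
V_{x,\epsilon N}^p(\eta) \le \frac{1}{(2\epsilon N+1)^d}\sum_{|y-x|\le \epsilon N} V_{y,k}^p(\eta) + \Big(\text{comparison of }\eta^k\text{-averages with }\eta^{\epsilon N}\text{-averages}\Big),
\]
where the first term is handled by the \textbf{one-block estimate} and the second by the \textbf{two-block estimate}.

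For the one-block estimate I would localize to a box of side $k$ around a site $y$. Because $v\in C^1$, on that box $p^N_z = v(z/N)+q_z = v(y/N) + q_z + O(k/N)$, so writing $\tilde p_z := v(y/N)+q_z$ we incur an error $O(k/N)$ which is negligible once $N\to\infty$ with $k=\epsilon N$, $\epsilon$ fixed, and then $\epsilon\to 0$ — this is exactly where the "slowly varying mean is locally almost constant" mechanism enters. After this replacement we are dealing with the pure ergodic-environment quantity $\big|\frac1{(2k+1)^d}\sum \tilde p_z g(\eta(z)) - \Phi(y/N,\eta^k(y))\big|$, for which the argument of \cite[Lemma 2.2]{Kou99} applies verbatim: one uses that the Dirichlet-form penalty forces the restricted measure to be close to the canonical measure $\nu^{N,\tilde p}$ conditioned on the number of particles in the box, invokes the equivalence of ensembles together with the ergodic theorem applied to $(q_z)$ (to pass from the empirical average $\frac1{(2k+1)^d}\sum \tilde p_z g(\eta(z))$ to $\E_m[\E_{\nu^1}[\,\cdot\,]] = \Phi$), and concludes by the law of large numbers for i.i.d.\ (under the canonical measure) coordinates. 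The two-block estimate is again the \cite{Kou99} argument: one couples the box of side $k$ around $y$ with a box of side $k$ around a site $y'$ at macroscopic distance, using the Dirichlet form to move particles between the boxes, and shows the difference of the two block-averages of $\eta$ vanishes; here too the local near-constancy of $v$ and the ergodicity of $q$ are what let the \cite{Kou99} estimates go through after a harmless $O(k/N)$ correction.

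The main obstacle I expect is the \textbf{combined role of the two perturbations in the one-block estimate}: in \cite{Kou99} the environment enters only through the stationary ergodic factor $q$, while in \cite{CR97} only through the smooth factor $v$, and here one must simultaneously (a) freeze $v$ to its value $v(y/N)$ on the mesoscopic box at cost $O(k/N)$, and (b) run the ergodic-theorem argument in $q$ uniformly in $N$ — which is why the authors fixed $q$ independently of $N$. One has to check that the equivalence-of-ensembles estimate and the resulting closeness to $\Phi(y/N,\cdot)$ hold with errors that are uniform over the $O((\epsilon N)^d)$ choices of the box center $y$, $m$-almost surely; this needs a uniform ergodic statement (e.g.\ via a maximal ergodic theorem or a covering/union-bound over boxes using that there are only polynomially many of them and the relevant functions are bounded by $b\cdot g^*$) so that the almost-sure set does not depend on $y$ or $N$. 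Once the one- and two-block estimates are in place with these uniform-in-$y$, $m$-a.s.\ error bounds, assembling them into Lemma \ref{L:RL} is routine: take $N\to\infty$ first, then $\epsilon\to 0$, and the intermediate scale $k\to\infty$ is absorbed in the standard way.
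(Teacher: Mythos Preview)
Your outline is correct and follows essentially the same route as the paper: reduce via the entropy and Dirichlet-form bounds of Lemma~\ref{L:a-priori}, then prove one- and two-block estimates by freezing the slowly varying mean $v$ on each mesoscopic box and running the stationary-ergodic argument of \cite{Kou99}. Two small remarks: in your one-block paragraph you write ``$k=\epsilon N$'' where you clearly intend $k$ to be the fixed intermediate scale (you had already set $l=\epsilon N$), so the freezing error is $O(l/N)\to 0$ for fixed $l$ as $N\to\infty$; and the paper secures the uniformity over box centers not by a maximal ergodic theorem or union bound but --- following \cite{Kou99} --- by discretizing the range $[a,b]$ of the environment into intervals $I^\delta_j$, introducing the ``good'' sets $A^{l,k,\delta}_{x,\alpha}$ and $\widetilde A^{l,k,\delta}_{x,u,\alpha}$ (with a further sub-scale $k<l$ inside the one-block box and tolerances $\alpha,\delta$), then taking a supremum over $u\in\T^d$ and using compactness/continuity, which is exactly what makes the $m$-almost-sure set independent of the box center and of $N$.
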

In the remainder of this section we will mainly prove Lemma \ref{L:RL},
where we sketch the modifications necessary to adapt the proof in \cite{Kou99} 
in order to take
into account the slowly varying average of the environment $v(\frac{x}{N})$. First we
prove the replacement over small boxes of size $l \in \N$.

\subsection{One block estimate}
\begin{lemma}[One Block Estimate]
 It holds that
\[
 \limsup_{l\to\infty} \limsup_{N\to\infty} \E_{\mu^N}\bigg[ \int_0^T
\frac{1}{N^d} \sum_{x\in\T^d_N} V_{x,l}^{p}(\eta_t) \;dt \bigg] = 0
\]
 $m$-almost surely.
\end{lemma}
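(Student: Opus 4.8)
The goal is to show that, after averaging in time and over mesoscopic blocks, one can replace the local current $\frac{1}{(2l+1)^d}\sum_{|x-y|\le l} p^N_y g(\eta(y))$ by the function $\Phi(\frac{x}{N},\eta^l(x))$ evaluated at the empirical block density. The strategy follows the standard entropy-method route of \cite{Kou99, KL99}, but with the key observation emphasized after Theorem \ref{T}: inside a block of fixed size $l$, the slowly varying mean $v(\frac{x}{N})$ is almost constant, so up to an error tending to zero with $N$ one may replace $v(\frac{y}{N})$ by $v(\frac{x}{N})$ for all $y$ with $|x-y|\le l$; this reduces the situation to a zero-range process in a purely stationary ergodic environment $(q_y)$ as in \cite{Kou99}.

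The plan is as follows. First I would use the a priori entropy and Dirichlet-form bounds of Lemma \ref{L:a-priori}: the entropy inequality lets one bound the expectation under $\mu^N$ by an expression involving $H(\mu^N_t|\nu^{N,p}_\varphi)\le CN^d$ plus a term controlled by the Dirichlet form, reducing the problem to estimating, for a reference measure $\nu=\nu^{N,p}_\varphi$, a variational expression of the form $\sup_f\big\{\int (\text{block average}) f\,d\nu - \tfrac{N^2}{C}\mathcal D(f,\nu)\big\}$. Second, I would localize: cut the torus into boxes of side $\epsilon N$, and within each such box further into sub-blocks of side $2l+1$; by superadditivity of the Dirichlet form and translation, the problem reduces to a single block of size $l$ sitting in a fixed realization of the environment. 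Third — and this is the step that uses the new ingredient — I would replace $v(\frac{y}{N})$ by its value at the block center, incurring an error $O(l\|\nabla v\|_\infty/N)\to 0$; what remains is exactly the one-block estimate for the ZRP in the ergodic environment $(q_y)$. Fourth, I would invoke the equivalence of ensembles together with the ergodic theorem for $(q_y)$: conditioned on the number of particles in the block, the canonical measure is close (in the appropriate sense, with $N\to\infty$ then $l\to\infty$) to $\nu^{N,q}_{\Phi(\frac{x}{N},\cdot)}$, and the spatial average $\frac{1}{(2l+1)^d}\sum q_y g(\eta(y))$ converges $m$-a.s. to $\E_m[\E_{\nu^{N,q}_\varphi}[g(\eta(0))q_0]] = \Phi(\frac{x}{N},\varphi)$ by the identities recorded after \eqref{R_func}. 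Finally, passing $N\to\infty$ and then $l\to\infty$ (and using $V^p_{x,l}$ is uniformly bounded by Assumption \ref{A}(i) together with the second-moment bound to justify uniform integrability) kills the error.

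The main obstacle is the fourth step: making the equivalence-of-ensembles argument in the \emph{random} environment rigorous and uniform in the environment realization, so that the final statement holds $m$-almost surely rather than merely in $m$-expectation. This requires controlling the canonical measures $\nu^{N,q}_{\cdot,\text{block}}$ uniformly over the (almost surely typical) realizations of $q$, and combining the quantitative equivalence-of-ensembles bounds of \cite{KL99} with the subadditive/multiparameter ergodic theorem applied to the environment. The interchange of the limits $N\to\infty$ and $l\to\infty$ with the $m$-a.s. statement is delicate but is handled exactly as in \cite[Lemma 2.1 and its proof]{Kou99}; the contribution of the present paper at this point is only the harmless reduction via the slowly varying mean described in step three, so I would state the ergodic and equivalence-of-ensembles inputs as quoted from \cite{Kou99} and concentrate the written proof on verifying that the $v(\frac{x}{N})$-localization error is negligible and that the resulting constant is indeed $\Phi(\frac{x}{N},\cdot)$.
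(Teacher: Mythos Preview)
Your plan is correct and follows essentially the same route as the paper: entropy/Dirichlet a priori bounds, localization to blocks of size $l$, freezing $v(\cdot/N)$ at the block center (the paper does this via the sets $\widetilde{A}^{l,k,\delta}_{x,u,\alpha}$), and then the one-block estimate in a stationary ergodic environment handled exactly as in \cite{Kou99} through the environment discretization sets $A^{l,k,\delta}_{x,\alpha}$ together with equivalence of ensembles. One small correction: the intermediate scale $\epsilon N$ in your step two plays no role in the one-block estimate --- the paper localizes directly to boxes of side $2l+1$, and the $\epsilon N$ scale enters only in the two-blocks estimate.
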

 Setting 
\[
 f^{N}_t(\eta) = \frac{d\mu^N_t}{d\nu^{N,p}_\varphi}(\eta), \qquad  F^{N}(\eta) =
\int_0^T f^{N}_t(\eta) \;dt,
\]
we see that
\[
 \E_{\mu^N}\bigg[ \int_0^T \frac{1}{N^d} \sum_{x\in\T^d_N} V_{x,l}^{p}(\eta_t) \;dt \bigg] = \frac{1}{N^d}
\sum_{x\in\T^d_N} \int_{\N^{\T^d_N}} V_{x,l}^{p}(\eta) F^{N}(\eta) \;d\nu^{N,p}_\varphi(\eta) \  .
\]
Lemma \ref{L:a-priori}, together with convexity of the entropy and Dirichlet
form, yields the bounds
\begin{equation}\label{ap-bounds}
\begin{split}
 H^{N,p}( F^{N}) := H( F^{N} \;d\nu^{N,p}_\varphi | \;d\nu^{N,p}_\varphi) &\le C N^d,\\
 \mathcal{D}^{N,p}(F^{N}) := \mathcal{D}( F^{N} \;d\nu^{N,p}_\varphi | \;d\nu^{N,p}_\varphi) &\le C N^{d-2}.
\end{split}
\end{equation}
For future reference, we also note the following expression of the Dirichlet form via the density $F^N$. It holds that
\begin{equation}\label{Dirichlet}
 \mathcal{D}^{N,p}(F^{N}) = \sum_{x\sim y} \int_{\N^{\T^d_N}} \frac{1}{2} p^N_x g(\eta(x)) \big[ \sqrt{F^N(\eta^{x,y})}
- \sqrt{F^N(\eta)} \big]^2
\;d\nu^{N,p}_\varphi \ .
\end{equation}
The next lemma allows us to restrict ourselves to bounded particle
configurations. Its proof is analogous to the proof of \cite[Lemma V.4.2]{KL99} with the entropy inequality
as additional ingredient.
\begin{lemma}
Under the conditions of Theorem \ref{T}, we have
\[
 \limsup_{A\to\infty} \limsup_{l\to\infty} \limsup_{N\to\infty} \sup_{p, f}
\int_{\N^{\T^d_N}} \frac{1}{N^d} \sum_{x\in\T^d_N} V_{x,l}^{p}(\eta) \chi_{\{\eta^l (x) > A\}} (\eta ) f(\eta)
\;d\nu^{N,p}_\varphi(\eta) = 0.
\]
where the supremum is taken over all densities $f$ such that $H^{N,p}(F^{N})
\le C N^d$, $\mathcal{D}^{N,p}(F^{N}) \le C N^{d-2}$ and all environments $p=p^N \subset [a,b]^{\T^d_N}$. For every set $\mathcal{A}$, we denote by $\chi_{\mathcal{A}}$ the characteristic function of $\mathcal{A}$. 
\end{lemma}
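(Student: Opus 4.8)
The plan is to split $V^p_{x,l}(\eta)$ using the indicator $\chi_{\{\eta^l(x)>A\}}$ and bound each of the two contributions inside $V^p_{x,l}$ separately, then use the entropy inequality to transfer the bound from $\nu^{N,p}_\varphi$ to the density $f$. Writing
\[
 V^p_{x,l}(\eta) \chi_{\{\eta^l(x)>A\}} \le \Big( \frac{1}{(2l+1)^d}\sum_{|x-y|\le l} p^N_y g(\eta(y)) + \Phi({\textstyle\frac{x}{N}},\eta^l(x)) \Big) \chi_{\{\eta^l(x)>A\}},
\]
I would first observe that, by Assumption \ref{A}(i), $g(n)\le g^* n$ up to an additive constant, so $p^N_y g(\eta(y)) \le b\,g^*\,\eta(y)$ plus a bounded term; averaging gives a bound by $C\,\eta^l(x)$. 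For the second term, since $\Phi(u,\cdot) = R(u,\cdot)^{-1}$ and $R$ grows at least linearly in $\varphi$ uniformly in $u$ (as $M(\varphi)\ge c\varphi$ for large $\varphi$ by Assumption \ref{A}(ii) and $v+q_0\in[a,b]$), one gets $\Phi(u,\rho)\le C(1+\rho)$ uniformly in $u$. Hence $V^p_{x,l}(\eta)\chi_{\{\eta^l(x)>A\}} \le C\,\eta^l(x)\,\chi_{\{\eta^l(x)>A\}}$ with $C$ independent of $N,l,p$.

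Next I would use the entropy inequality: for any $\gamma>0$,
\[
 \int \frac{1}{N^d}\sum_x C\,\eta^l(x)\chi_{\{\eta^l(x)>A\}} f \, d\nu^{N,p}_\varphi
 \le \frac{1}{\gamma N^d} H^{N,p}(F^N) + \frac{1}{\gamma N^d}\log \int \exp\Big(\gamma \sum_x C\,\eta^l(x)\chi_{\{\eta^l(x)>A\}}\Big) d\nu^{N,p}_\varphi .
\]
The first term is $\le C/\gamma$ by the entropy bound. For the second, $\nu^{N,p}_\varphi$ is a product measure with marginals having exponential moments (finite $Z$ on all of $[0,\infty)$, and $(p^N_x)^{-1}\varphi$ stays in a compact subset of $(0,\infty)$, so the moment generating function of $\eta(x)$ is finite for $\gamma$ small), and a block average $\eta^l(x)$ is a convex combination of such; by Jensen/Hölder in the product structure one reduces the exponential moment of the averaged, cut-off variable to single-site exponential moments restricted to large values. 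The key point is that the single-site tail $\nu^1_\varphi(n)$ decays superexponentially (again from $Z<\infty$), so $\E_{\nu^1_\varphi}[e^{\gamma' \eta(0)}\chi_{\{\eta(0)>A'\}}] \to 0$ as $A\to\infty$ uniformly over the relevant compact range of fugacities; this forces the $\limsup_{N\to\infty}$ of the log-moment term, after dividing by $N^d$, to be at most some $\beta(A,l,\gamma)$ with $\limsup_{l\to\infty}\limsup_{A\to\infty}\beta=0$. Choosing $\gamma$ fixed and then letting $N,l,A\to\infty$ in the stated order, and finally $\gamma\to 0$ after the fact (or just noting the $C/\gamma$ term is harmless since the whole expression is being sent to $0$ by an outer limit in $A$), yields the claim.

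The main obstacle is the uniformity in the environment $p=p^N\subset[a,b]^{\T^d_N}$ and in the density $f$: one must make sure that every constant (the bound $\Phi(u,\rho)\le C(1+\rho)$, the single-site exponential moment bound, the tail estimate) depends only on $a,b,g_0,g^*$ and $\varphi$, not on the particular realization of $p$ or on $f$. This is exactly where the condition $p^N_x\in[a,b]$ with $0<a<b$ is used — it keeps the effective single-site fugacity $(p^N_x)^{-1}\varphi$ in a fixed compact subinterval of $(0,\infty)$, so all the grand-canonical estimates are uniform. The slowly-varying mean $v$ plays no role here beyond contributing to $p^N_x$ through the $[a,b]$ bound, so this lemma is genuinely the same as in \cite{KL99} once the $\Phi$-bound is checked to be environment-uniform; I would state it that way and only spell out the $\Phi$ estimate in detail.
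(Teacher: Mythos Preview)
Your strategy is the one the paper intends --- it cites \cite[Lemma~V.4.2]{KL99} together with the entropy inequality as the extra ingredient, which is precisely your plan. The pointwise bound $V^p_{x,l}\,\chi_{\{\eta^l(x)>A\}}\le C\,\eta^l(x)\,\chi_{\{\eta^l(x)>A\}}$ via Assumption~\ref{A} and the uniform estimate $\Phi(u,\rho)\le C(1+\rho)$ is correct, and your observation that uniformity in $p\in[a,b]^{\T^d_N}$ is the only genuinely new issue is also right.

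There is, however, a real slip at the end. After the entropy inequality you obtain, for each fixed $\gamma>0$, a bound of the form $C/\gamma$ plus a log-moment term that vanishes in the triple limit. Sending $\gamma\to 0$ makes $C/\gamma\to\infty$, not $0$; and your parenthetical claim that $C/\gamma$ is ``sent to $0$ by an outer limit in $A$'' is simply false --- that term is independent of $A,l,N$. The correct move is $\gamma\to\infty$. This is permissible precisely because Assumption~\ref{A}(ii) forces $g(n)!\ge g_0^n\, n!$, so the single-site marginals are dominated by Poisson laws and hence have finite exponential moments of \emph{every} order, uniformly for $p\in[a,b]$. Thus for each $\gamma$ the log-moment term still vanishes in the stated order of limits (one needs $A$ large depending on $\gamma$, which is fine since $A\to\infty$ is the outermost limit), and then $C/\gamma\to 0$. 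A second, smaller imprecision: the event $\{\eta^l(x)>A\}$ is a block event and does not reduce to single-site tails $\E[e^{\gamma'\eta(0)}\chi_{\{\eta(0)>A'\}}]$ as you suggest. The standard way to handle the log-moment term is H\"older over the $(2l+1)^d$ cosets of the sublattice $(2l+1)\Z^d$ to decouple the overlapping blocks, followed by the product structure and an exponential Markov bound for the block sum $S=\sum_{|y|\le l}\eta(y)$; this yields a bound of the form $\frac{1}{\gamma(2l+1)^d}\log\big(1+\alpha(\gamma,A)^{(2l+1)^d}\big)$ with $\alpha(\gamma,A)<1$ for $A$ large, which then goes to $0$ as $l\to\infty$.
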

Therefore we just need to prove
\[
 \limsup_{l\to\infty} \limsup_{N\to\infty} \frac{1}{N^d} \sum_{x\in\N^{\T^d_N}} \int_{\N^{\T^d_N}} V_{x,l,A}^{p}(\eta)
F^{N}(\eta) \;d\nu^{N,p}_\varphi(\eta) = 0
\]
$m$-almost surely, where we set $V_{x,l,A}^p(\eta) := V_{x,l}^p(\eta) \chi_{\{\eta^l(x) \le A\}} (\eta )$. 
In fact, due to the bounds \eqref{ap-bounds} on the Dirichlet form 
of $F^{N}$, it suffices to find an upper bound for
\begin{equation}\label{limsup_proof1}
 \frac{1}{N^d}\sum_{x\in\T^d_N} \int_{\N^{\T^d_N}} V_{x,l,A}^{p}(\eta) F^{N}(\eta)
\;d\nu^{N,p}_\varphi(\eta) - \gamma C N^{2-d} \mathcal{D}^{N,p}(F^{N})
\end{equation}
for all $\gamma > 0$. Next we restrict the problem to translations of the small box $\Lambda_l := \{ -2l, \ldots, 2l
\}$. Denote by $\nu^{x,l,p}_{\varphi}$ the $\Lambda_{x,l}$-marginal of $\nu^{N,p}_\varphi$ and by $F_{x,l}(\eta)$ the
density of the $\Lambda_{x,l}$-marginal of the measure $F^{N}(\eta) d\nu_\varphi^{N,p}(\eta)$ with respect to
$\nu^{x,l,p}_{\varphi}$. We will sometimes drop the index for $x=0$ for such 
quantities when taken at the origin.
Furthermore we define a Dirichlet form on $\Lambda_{x,l}$ by
\begin{align*} 
 \mathcal{D}^p_{x,l}(f) &= \sum_{\stackrel{y\sim z}{y,z\in\Lambda_{x,l}}} I_{y,z}^p(f),\qquad \text{where we have set}\\
 I_{y,z}^p(f) &= \frac{1}{2}
\int_{\N^{\T^d_N}} p^N_y g(\eta(y)) \Big( \sqrt{f(\eta^{y,z})} - \sqrt{f(\eta)} \Big)^2 d\nu^{N,p}_\varphi.
\end{align*}
By convexity of the ''bond''-Dirichlet forms $I^p_{x,y}$, we have $I^{p}_{y,z}(F_{x,l}) \le I^{p}_{y,z}(F^{N})$ for all
neighbors $x,y\in\Lambda_{x,l}$ and thus
\[
 \frac{1}{N^d}\sum_{x\in\T^d_N} \mathcal{D}^{p}_{x,l}(F_{x,l}) \le \frac{C(l)}{N^d} \mathcal{D}^{N,p}(F^{N}).
\]
Hence instead of expression~\eqref{limsup_proof1}, we just need to 
find an upper bound for
\begin{equation}\label{limsup_proof2}
 \frac{1}{N^d}\sum_{x\in\T^d_N} \bigg\{ \int_{\N^{\T^d_N}} V_{x,l,A}^{p}(\eta)
F_{x,l}(\eta) \;d\nu^{x,l,p}_{\varphi}(\eta) - \gamma C(l) N^{2} \mathcal{D}^{p}_{x,l}(F_{x,l}) \bigg\}.
\end{equation}
Next we need to take care of the random part of the environment. The method to keep track of the environment is due to
\cite{Kou99} and constitutes the main deviation from the proof of the hydrodynamic limit for the usual ZRP as
exhibited in \cite{KL99}. Thus we fix $\alpha, \delta > 0$ and let $n \in \mathbb{N}$ be sufficiently large such that
$\frac{1}{n} < \delta$. Divide the interval $[a,b]$ into sub-intervals of length not greater than $\delta (b - a)$ via
$I^\delta_j = [\beta_j,
\beta_{j+1})$ for $0 \leq j \leq n-2$ where
\[
\beta_j = a + (b - a) \frac{j}{n} \qquad (j = 0, \ldots, n-1)
\]
and $I^\delta_{n-1} = [\beta_{n-1}, b]$. Fix $k < l$ and let $L = [(2l + 1)/(2k + 1)]^d$, where $[x]$ denotes the
Gaussian bracket, i.e.\ the largest integer smaller than or equal to $x\in\mathbb{R}$. Now we divide $\Lambda_l$ into
disjoint cubes of the form $x+\Lambda_k$, where we take $B_i$, $i = 1,\ldots,L-1$, such that
\[
B_i \subseteq \Lambda_l, \quad B_i \cap B_j = \emptyset \text{ for } i \neq j, \quad \text{ and } B_i = x_i +\Lambda_{l}
\text{ for some } x_i \in \mathbb{Z}^d.
\]
Then set $B_L = \Lambda_l \setminus \bigcup_{i=1}^{L-1} B_i$ and without restriction take $x_1 = 0$, i.e.\ $B_1 =
\Lambda_k$. Also set $B_i(x) = x + B_i$ for $x \in \mathbb{Z}^d$.
For $x \in \mathbb{T}^d_N$, $\alpha\in (0,1)$ and $q \in [a,b]^{\Lambda_l}$, set
\begin{equation}\label{Anelka}
\begin{aligned}
N^{l,k,\delta}_{x,j,i}(q) &:= \frac{1}{(2k + 1)^d} \sum_{z \in B_i(x)} \chi_{I^\delta_j }(q_z),\\
A^{l,k,\delta}_{x,i,\alpha} &:= \left\{(q_z)_{z \in \Lambda_l} : \left|N^{l,k,\delta}_{x,j,i}(q) -
\E_m [
\chi_{I^\delta_j}  (q_z) 
]\right| \leq \alpha, j = 0, \ldots, n-1 \right\},\\
A^{l,k,\delta}_{x,\alpha} &:= \left\{(q_z)_{z \in \Lambda_l} : \frac{1}{L} \sum_{i=1}^L \chi_{A^{l,k,\delta}_{x,i,\alpha} }(q) \geq 1 - \alpha \right\}.
\end{aligned}
\end{equation}
Since
$V^{p}_{l,A}\le C(A)$ is bounded, the expression~\eqref{limsup_proof2} is 
bounded form above by
\begin{multline}\label{limsup_proof3}
\frac{1}{N^d}\sum_{x\in\T^d_N} \Bigg\{ \chi_{A^{l,k,\delta}_{x,\alpha}} (p)
\bigg(\int_{\N^{\T^d_N}} V_{x,l,A}^{p}(\eta) F_{x,l}(\eta) \;d\nu^{x,l,p}_{\varphi}(\eta) - \gamma C(l) N^{2}
\mathcal{D}^{p}_{x,l}(F_{x,l}) \bigg )\\
+ C(A) (1-\chi_{A^{l,k,\delta}_{x,\alpha}} (q))\Bigg\}.
\end{multline}
Now we take care of the non-random part of the enviroment. To this end, set
\[
 {\widetilde A}^{l,k,\delta}_{x,u,\alpha} = \Big\{ (p_z)_{z \in \Lambda_l}   : p_z = q_z + v_z, q\in
A^{l,k,\delta}_{x,\alpha}, \sup_{z\in\Lambda_l} |v_{x+z} - v(u)| \le \alpha \Big\}
\]
 Since $v$ is smooth, in particular (uniformly) continuous, for every $\alpha$ we can choose $N$ large enough such
that $v_{x+z}:= v((x+z)/N)$ does not differ from $v(x/N)$ by more than $\alpha$. Thus~\eqref{limsup_proof3} is
bounded from above by
\begin{multline}\label{limsup_proof4}
 \frac{1}{N^d}\sum_{x\in\T^d_N} \Bigg\{ \chi_{{\widetilde A}^{l,k,\delta}_{x,\frac{x}{N},\alpha}} (p)
\bigg(\int_{\N^{\T^d_N}} V_{x,l,A}^{p}(\eta) F_{x,l}(\eta) \;d\nu^{x,l,p}_{\varphi}(\eta) - \gamma C(l) N^{2}
\mathcal{D}^{p}_{x,l}(F_{x,l})\bigg)\\
 + C(A) (1-\chi_{A^{l,k,\delta}_{x,\alpha}} (q))\Bigg\}.
\end{multline}
Since the random part $q$ of the environment is stationary and ergodic by assumption, it holds
\[
\frac{1}{N^d}\sum_{x\in\T^d_N} \Big(1-\chi_{A^{l,k,\delta}_{x,\alpha} } (q)\Big) \xrightarrow{N\to\infty} \P_m(q\notin
A^{l,k,\delta}_{0,\alpha}),
\]
which in turn vanishes by ergodicity in the limit as $l\to\infty$ and $k\to\infty$.
The remaining term in~\eqref{limsup_proof4} is bounded from above by
\begin{multline*}
 \sup_{u\in\T^d}\chi_{ {\widetilde A}^{l,k,\delta}_{[uN],u,\alpha}} (p)
\bigg(\int_{\N^{\T^d_N}} V_{{[uN]},l,A}^{p}(\eta) F_{[uN],l}(\eta) \;d\nu^{[uN],l,p}_{\varphi}(\eta)\\ - \gamma
C(l) N^{2} \mathcal{D}^{p}_{[uN],l}(F_{[uN],l})\bigg).
\end{multline*}
Again $\tau_{-[uN]} p$ is in ${\widetilde A}^{l,k,\delta}_{0,u,\alpha}$ for large enough $N$ and the term involving
$\Phi(\frac{[uN]}{N},\rho)$ which appears in $V^p_{[uN],l,A}$ converges uniformly in $u$ and $\rho\le A$ to
$\Phi(u,\rho)$. Thus we see that, up to an error that vanishes as $N\to\infty$, it suffices to estimate
\begin{equation}\label{limsup_proof5}
 \sup_{u\in\T^d}\sup_{p \in {\widetilde A}^{l,k,\delta}_{0,u,\alpha}}\sup_{f} \bigg(\int_{\N^{\Lambda_l}}
V_{l,A}^{p}(\eta) f(\eta) \;d\nu^{l,p}_{\varphi}(\eta) - \gamma C(l) N^{2} \mathcal{D}^{p}_{0,l}(f)\bigg),
\end{equation}
where the inner supremum is taken over all densities $f$ on $\N^{\Lambda_l}$. For fixed $l\in\N$, the term
$V^p_{0,l,A}$ is only non-zero on the compact space of configurations in $\N^{\Lambda_l}$ of at most $(2l+1)^d A$
particles, the Dirichlet form is lower-semicontinuous and this property is conserved by the supremum. Hence the limit
superior of~\eqref{limsup_proof5} is bounded from above 
by
\begin{equation}\label{limsup_proof6}
 \sup_{u\in\T^d}\sup_{p \in {\widetilde A}^{l,k,\delta}_{0,u,\alpha}}\sup_{f} \int_{\N^{\Lambda_l}}
V_{l,A}^{p}(\eta) f(\eta) \;d\nu^{l,p}_{\varphi}(\eta),
\end{equation}
where now the inner supremum is taken over all densities $f$ on $\N^{\Lambda_l}$ with vanishing Dirichlet form
$\mathcal{D}^{p}_{l}(f) = 0$. A density $f$ has vanishing Dirichlet form if and only if it is constant along all
hyperplanes of a given number of particles. With this in mind, we introduce the \emph{canonical measures}
\[
 \nu^p_{l,K}(\,\cdot\,) := \nu^{0,l,p}_{\varphi}(\,\cdot\, | \, {\textstyle\sum_{x\in\Lambda_l}\eta(x) = K}) \ .
\]
Since the probability density $f$ is constant on $\{\sum_{x\in\Lambda_l}\eta(x) = K\}$, we can
estimate~\eqref{limsup_proof6} from above by
\begin{multline}\label{limsup_proof7}
 \sup_{u\in\T^d_N}\sup_{p \in {\widetilde A}^{l,k,\delta}_{0,u,\alpha}}\max_{0\le K\le (2l+1)^d A} \Bigg\{
\int_{\N^{\T^d_N}} \frac{1}{L}\sum_{i=1}^L \int \Big| \frac{1}{(2k+1)^d} \sum_{x\in B_i} p_x g(\eta(x))\\ -
\Phi\big(u,{\textstyle\frac{K}{(2l+1)^d}}\big) \Big| \;d\nu^{p}_{l,K}(\eta) \Bigg\},
\end{multline}
where we have inserted the explicit form of $V^p_{0,l,A}$ and applied the triangle inequality. For given $u$ and
$p\in{\widetilde A}^{l,k,\delta}_{0,u,\alpha}$, we define $\varphi^p_K$ by
\begin{equation}\label{varphi_p_K}
 \E_{\nu^{l,p}_{\varphi^p_K}} [\eta^l(x)] = \frac{K}{(2l+1)^d}.
\end{equation}
Note that $0\le\varphi^p_K\le C(A)$ is uniformly bounded in $0\le K \le (2l+1)^d A$ and $l\in\N$. The next lemma
concerns the closeness of the grand-canonical and the canonical measures.
\begin{lemma}[Equivalence of ensembles]\label{L:EoE}
 For all $F:\N^{\Lambda_k} \to \R$ with finite second moments with respect to $\nu^{k,p}_{\varphi}$ for all $\varphi \le
C(A)$, it holds that
\[
 \lim_{l\to\infty} \sup_{p\in[a,b]^{\Z^d}} \max_{0\le K \le (2l+1)^d A} \bigg|
\E_{\nu^{l,p}_{\varphi^p_K}} \big[ F(\eta) \big] - \E_{\nu^p_{l,K}} \big[ F(\eta) \big] \bigg| = 0.
\]
\end{lemma}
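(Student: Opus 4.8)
The plan is to follow the classical strategy for proving equivalence of ensembles, adapted here to carry the environment $p$ uniformly. Since $F$ depends only on the coordinates in the small cube $\Lambda_k$, the key object is the Radon-Nikodym derivative of the canonical measure $\nu^p_{l,K}$ restricted to $\Lambda_k$ with respect to the grand-canonical marginal $\nu^{k,p}_{\varphi^p_K}$, where $\varphi^p_K$ is the fugacity matched to the density $K/(2l+1)^d$ via \eqref{varphi_p_K}. Writing $\Lambda_l = \Lambda_k \sqcup \Lambda_k^c$ and using the product structure \eqref{GC} together with the fact that conditioning on a fixed total number of particles factorizes as a convolution of partition functions, one obtains the explicit formula
\[
 \frac{d\nu^p_{l,K}}{d\nu^{k,p}_{\varphi^p_K}}(\eta) = \frac{Z^{p}_{\Lambda_k^c}\big(K - \textstyle\sum_{x\in\Lambda_k}\eta(x)\big)}{Z^{p}_{\Lambda_l}(K)}\,\big(\varphi^p_K\big)^{-\sum_{x\in\Lambda_k}\eta(x)}\prod_{x\in\Lambda_k} Z\big((p_x)^{-1}\varphi^p_K\big),
\]
where $Z^p_\Lambda(M)$ denotes the canonical partition function on a box $\Lambda$ at total particle number $M$. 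The whole problem thus reduces to controlling the ratio $Z^p_{\Lambda_k^c}(K-j)/Z^p_{\Lambda_l}(K)$ uniformly in $p\in[a,b]^{\Z^d}$, in $0\le K\le (2l+1)^d A$ and in the bounded range of $j$ that matters (namely $j$ up to roughly $(2k+1)^d A$, since $F$ is integrated against a measure giving negligible mass to larger values, by Assumption \ref{A}(ii) and the uniform second-moment control).

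First I would establish a local central limit theorem for the sums $\sum_{x\in\Lambda}\eta(x)$ under the grand-canonical product measure $\nu^{l,p}_{\varphi}$, uniformly over $p\in[a,b]^{\Z^d}$ and over $\varphi$ in the compact range $[0,C(A)]$. The summands are independent but not identically distributed (each site $x$ contributes an $\N$-valued variable with law $\nu^1_{(p_x)^{-1}\varphi}$), yet Assumption \ref{A} forces the one-site laws to have densities, variances and third moments that are all bounded above and below uniformly in $p_x\in[a,b]$ and $\varphi\in[0,C(A)]$; moreover the greatest common divisor of their supports is $1$ so no lattice-periodicity obstruction arises. Hence a Berry-Esseen / Edgeworth-type argument (as in \cite[Appendix 2]{KL99}) gives, uniformly, $Z^p_\Lambda(M) = \nu^{l,p}_\varphi\big(\sum\eta(x)=M\big)\,\varphi^{-M}\prod_x Z((p_x)^{-1}\varphi) $ with the probability behaving like $\big(2\pi\sigma^2_\Lambda(\varphi)\big)^{-1/2}\big(1+o(1)\big)$ when $M$ is within $O(\sqrt{|\Lambda|})$ of the mean, the $o(1)$ being uniform in $p$. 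Choosing $\varphi = \varphi^p_K$ so that the mean equals $K/(2l+1)^d$ per site makes both numerator and denominator fall in this regime (the shift $j$ is bounded, hence $O(1)=o(\sqrt{|\Lambda_l|})$), and the two Gaussian prefactors have the same leading order $2\pi\sigma^2_{\Lambda_l}(\varphi^p_K)$ up to a factor $(1+O(k^d/l^d))$ which tends to $1$ as $l\to\infty$ with $k$ fixed. Feeding this back into the formula for the Radon-Nikodym derivative shows $d\nu^p_{l,K}/d\nu^{k,p}_{\varphi^p_K}\to 1$ in an appropriately uniform sense, which combined with the uniform second-moment bound on $F$ and a Cauchy-Schwarz splitting into a bulk part (where the density is uniformly close to $1$) and a negligible tail yields the claimed limit.

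The main obstacle I anticipate is making every estimate uniform in the environment $p$ simultaneously with uniformity in $K$ and in the (small) shift $j$: the local CLT constants depend on the variances $\sigma^2(p_x,\varphi)$ which vary from site to site, so one must argue that the relevant quantities — the per-site variance, the Cramér-transform normalization, and the third absolute moment entering Berry-Esseen — are all bounded in a compact interval bounded away from $0$ and $\infty$, uniformly over $p_x\in[a,b]$ and $\varphi\in[0,C(A)]$; this is exactly where Assumption \ref{A} (Lipschitz continuity giving the upper bounds, at-least-linear growth giving finiteness of $Z$ and hence moment bounds, together with $g(n)=0\iff n=0$ preventing degeneracy) is used. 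A secondary technical point is that $\Lambda_k^c$ inside $\Lambda_l$ is not itself a nice box, but since it contains $(2l+1)^d-(2k+1)^d$ sites and $l\to\infty$, the local CLT applies to it as well with the same uniform constants. Once the uniform local CLT is in hand the rest is the standard equivalence-of-ensembles bookkeeping, essentially as in \cite[Appendix 2, Corollary 1.7]{KL99}, so I would state the local CLT as a separate sublemma, prove it carefully with the uniformity in $p$, and then give the short deduction of Lemma \ref{L:EoE}.
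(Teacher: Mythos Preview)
Your proposal is correct and follows exactly the approach the paper takes: the paper's own proof is a one-line reference stating that, thanks to the uniform bounds $a\le p_x\le b$, the equivalence of ensembles can be proved as in \cite[Appendix 2]{KL99} (see also \cite[Lemma 3.3]{Kou99}). Your sketch simply spells out that argument --- the Radon--Nikodym derivative via canonical partition functions, a local CLT for the independent but non-identical one-site laws with constants uniform over $p_x\in[a,b]$ and $\varphi\in[0,C(A)]$, and the resulting uniform convergence of the density to $1$ --- which is precisely what those references contain.
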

\begin{proof}
 Thanks to the uniform bounds $a\leq p_x \le b$, the equivalence of ensembles can be proved as explained in
\cite[Appendix 2]{KL99}, see also \cite[Lemma 3.3]{Kou99}.
\end{proof}
 By applying Lemma \ref{L:EoE} to \[F(\eta) = \frac{1}{(2k+1)^d} \sum_{x\in B_i} p_x g(\eta(x)),\] we just need to find an upper bound for
\begin{multline}\label{limsup_proof8}
 \sup_{u\in\T^d_N}\sup_{p \in {\widetilde A}^{l,k,\delta}_{0,u,\alpha}}\max_{0\le K\le (2l+1)^d A} \Bigg\{
\int_{\N^{\T^d_N}} \frac{1}{L}\sum_{i=1}^L \int \Big| \frac{1}{(2k+1)^d} \sum_{x\in B_i} p_x g(\eta(x))\\ -
\Phi\big(u,{\textstyle\frac{K}{(2l+1)^d}}\big) \Big| \;d\nu^{l,p}_{\varphi^p_K}(\eta) \Bigg\}.
\end{multline}
The next two lemmas yield convergence of this expression in the limit as $l\to\infty$ and then $k\to \infty$. In a
certain sense, the first lemma describes the ``ergodic'' error between spatial averages and the quenched grand-canonical average. 
\begin{lemma}\label{lemma_OBE1}
For all $\alpha,\delta > 0$, it holds that
\begin{multline*}
 \limsup_{k\to\infty} \limsup_{l\to\infty} \sup_{u\in\T^d_N}\sup_{p \in {\widetilde
A}^{l,k,\delta}_{0,u,\alpha}}\max_{0\le K\le (2l+1)^d A}\\ \Bigg\{
\int_{\N^{\T^d_N}} \frac{1}{L}\sum_{i=1}^L \int \Big| \frac{1}{(2k+1)^d} \sum_{x\in B_i} p_x g(\eta(x)) -
\varphi^p_K \Big| \;d\nu^{l,p}_{\varphi^p_K}(\eta) \Bigg\} = 0.
\end{multline*}
\end{lemma}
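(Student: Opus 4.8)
The plan is to exploit that $\nu^{l,p}_{\varphi^p_K}$ is a \emph{product} measure on $\Lambda_l$ and that, under it, the block averages in the statement are empirical means of independent random variables with the common expectation $\varphi^p_K$. The assertion then reduces to a weak law of large numbers, uniform in the data; no genuinely ``ergodic'' input is needed once the environment $p$ is frozen, which is why the measure appearing is the grand-canonical one and not the canonical one.

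First I would record the elementary zero-range identity $\E_{\nu^1_\psi}[g(\eta(0))] = \psi$, valid for every fugacity $\psi$ (a one-line computation from the definitions of $\nu^1_\psi$ and $Z$). Since the one-site marginal of $\nu^{l,p}_{\varphi^p_K}$ at $x$ is $\nu^1_{(p_x)^{-1}\varphi^p_K}$, this gives $\E_{\nu^{l,p}_{\varphi^p_K}}[p_x\, g(\eta(x))] = \varphi^p_K$ for every $x \in \Lambda_l$, with no dependence on $x$ or on $p$. In particular every $\frac{1}{(2k+1)^d}\sum_{x\in B_i} p_x\, g(\eta(x))$ with $|B_i| = (2k+1)^d$ has mean exactly $\varphi^p_K$, so the integrand is the mean absolute deviation of an average of independent random variables from their common mean. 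Next I would establish a uniform second moment bound: Assumption \ref{A}(i) gives $g(n) \le g^* n$, whence $\mathrm{Var}_{\nu^{l,p}_{\varphi^p_K}}(p_x\, g(\eta(x))) \le b^2 (g^*)^2\, \E_{\nu^1_{(p_x)^{-1}\varphi^p_K}}[\eta(0)^2]$; Assumption \ref{A}(ii) forces $g(n)! \ge g_0^n\, n!$, so $\nu^1_\psi$ has Poisson-type tails and all of its moments are bounded uniformly for $\psi$ in any compact subset of $[0,\infty)$. Combined with the bound $0 \le \varphi^p_K \le C(A)$ noted after \eqref{varphi_p_K} (so $(p_x)^{-1}\varphi^p_K \le C(A)/a$), this yields a single constant $C(A)$ bounding all these variances, uniformly in $x$, in $p \in [a,b]^{\Lambda_l}$, in $0 \le K \le (2l+1)^d A$ and in $l$.

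With these two facts I would split $\frac{1}{L}\sum_{i=1}^L$ into the full blocks $B_1,\dots,B_{L-1}$ and the remainder $B_L$. For $i < L$, Cauchy--Schwarz together with independence gives
\[
  \int \Big| \frac{1}{(2k+1)^d} \sum_{x \in B_i} p_x\, g(\eta(x)) - \varphi^p_K \Big|\, d\nu^{l,p}_{\varphi^p_K}
  \;\le\; \frac{C(A)^{1/2}}{(2k+1)^{d/2}},
\]
so the total contribution of these blocks is at most $C(A)^{1/2}(2k+1)^{-d/2}$, independently of $l$, $u$ and $p$. For $i = L$, a volume count using $|B_L| = |\Lambda_l| - (L-1)(2k+1)^d$ and $L = [(2l+1)/(2k+1)]^d$ shows that $|B_L|$ is of order at most $C_d/m$ times $(2k+1)^d L$, with $m = [(2l+1)/(2k+1)]$; hence, after the triangle inequality (splitting off the term $\frac{|B_L|}{(2k+1)^d}\varphi^p_K$) and Cauchy--Schwarz for the independent centered sum over $B_L$, the $i=L$ term is of order at most $C_d\, C(A)/m$ and therefore vanishes as $l\to\infty$ for each fixed $k$. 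Taking $\limsup_{l\to\infty}$ thus leaves only $C(A)^{1/2}(2k+1)^{-d/2}$, and $\limsup_{k\to\infty}$ sends this to $0$. Every estimate above is uniform over $u\in\T^d$, over $p \in {\widetilde A}^{l,k,\delta}_{0,u,\alpha} \subseteq [a,b]^{\Lambda_l}$, and over $0\le K\le(2l+1)^d A$, so the suprema and the maximum in the statement are under control throughout.

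I do not expect a deep obstacle: the ``ergodic error'' the lemma describes is really just the fluctuation of an i.i.d.-type average once one conditions on the fixed environment. The two points that need care are (a) the uniform moment bound on $g(\eta(x))$, where both parts of Assumption \ref{A} together with the a priori cut-off $A$ enter essentially, and (b) the leftover block $B_L$, whose cardinality is not $(2k+1)^d$; handling it is exactly why the iterated limit must be taken in the order $\limsup_{k\to\infty}\limsup_{l\to\infty}$, so that the relative size of $B_L$ has already been driven to zero before $k$ is sent to infinity.
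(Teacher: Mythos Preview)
Your proposal is correct and follows the same approach as the paper: under the grand-canonical product measure $\nu^{l,p}_{\varphi^p_K}$ the variables $p_x g(\eta(x)) - \varphi^p_K$ are independent, centered, and have uniformly bounded variance, so the block averages vanish by a law of large numbers. The paper's proof is a three-line sketch of exactly this argument; your version is simply more explicit, in particular in justifying the moment bounds via Assumption~\ref{A} and in handling the leftover block $B_L$ (which the paper does not isolate).
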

The second lemma describes the difference between the quenched and annealed grand-canonical averages.
\begin{lemma}\label{lemma_OBE2}
 It holds that
\begin{multline*}
 \limsup_{\delta\to 0} \limsup_{\alpha\to 0} \limsup_{k\to\infty} \limsup_{l\to\infty} \sup_{u\in\T^d_N}\sup_{p \in
{\widetilde
A}^{l,k,\delta}_{0,u,\alpha}}\max_{0\le K\le (2l+1)^d A} \Big| \varphi^p_K -
\Phi\big(u,{\textstyle\frac{K}{(2l+1)^d}}\big) \Big|\\ = 0.
\end{multline*}
\end{lemma}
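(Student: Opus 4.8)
The plan is to reduce the lemma to a quenched law of large numbers for the empirical distribution of the environment over the averaging box $\Lambda:=\{z:|z|\le l\}$ of $\eta^l$ (which has $(2l+1)^d$ sites), followed by an inversion of the annealed density function $R(u,\cdot)$. Set $\bar\rho_K:=K/(2l+1)^d\in[0,A]$. By the product formula \eqref{GC} the one-site marginal of $\nu^{l,p}_\varphi$ at a site $z$ is $\nu^1_{\varphi/p_z}$, so $\E_{\nu^{l,p}_\varphi}[\eta(z)]=M(\varphi/p_z)$ and the defining relation \eqref{varphi_p_K} reads
\[
 \frac{1}{(2l+1)^d}\sum_{z\in\Lambda}M\Big(\frac{\varphi^p_K}{p_z}\Big)=\bar\rho_K,\qquad\text{while}\qquad R(u,\varphi)=\E_m\Big[M\Big(\frac{\varphi}{v(u)+q_0}\Big)\Big]
\]
and $R(u,\Phi(u,\bar\rho_K))=\bar\rho_K$ by definition of $\Phi$. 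Since $0\le\varphi^p_K\le C(A)$, and since under Assumption \ref{A} the function $M$ is $C^1$ with $M'>0$ on $[0,\infty)$ — hence Lipschitz (with a constant $L_M$) and bounded on the compact $[0,C(A)/a]$ — everything will follow from the quenched ergodic estimate $(\star)$: there is a quantity $E=E(l,k,\delta,\alpha)$, independent of $u\in\T^d$, of $p\in{\widetilde A}^{l,k,\delta}_{0,u,\alpha}$ and of $\varphi\in[0,C(A)]$, such that
\[
 \Big|\frac{1}{(2l+1)^d}\sum_{z\in\Lambda}M\Big(\frac{\varphi}{p_z}\Big)-R(u,\varphi)\Big|\le E\qquad\text{with}\qquad \limsup_{\delta\to0}\limsup_{\alpha\to0}\limsup_{k\to\infty}\limsup_{l\to\infty}E=0.
\]

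To prove $(\star)$ I would decompose $\Lambda$ exactly as in the construction preceding \eqref{Anelka} into cubes $B_1,\dots,B_{L-1}$ of side $2k+1$ plus a remainder $B_L$; counting how many such cubes fit into $\Lambda$ gives $|B_L|/(2l+1)^d\la k/l$. For $p\in{\widetilde A}^{l,k,\delta}_{0,u,\alpha}$ write $p_z=q_z+v_z$ with $q\in A^{l,k,\delta}_{0,\alpha}$ and $\sup_z|v_z-v(u)|\le\alpha$, and call $B_i$ good if $q\in A^{l,k,\delta}_{0,i,\alpha}$. By the definition of $A^{l,k,\delta}_{0,\alpha}$ the bad cubes and $B_L$ together occupy a fraction $\la\alpha+k/l$ of $\Lambda$, where $M$ is bounded, so they contribute $\la\alpha+k/l$ to $(\star)$. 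On a good cube $B_i$ I would first replace $p_z$ by $v(u)+q_z$, which by Lipschitzness of $M$ and $|1/p_z-1/(v(u)+q_z)|\la\alpha$ costs $\la\alpha$ per site; it then remains to compare $(2k+1)^{-d}\sum_{z\in B_i}h(q_z)$ with $\E_m[h(q_0)]=R(u,\varphi)$, where $h(q):=M(\varphi/(v(u)+q))$. Approximating $h$ by the step function equal to $h(\beta_j)$ on $I^\delta_j$ costs $\la\delta$ by the modulus of continuity of $M$, while for the step function the difference is $\sum_j h(\beta_j)\big(N^{l,k,\delta}_{0,j,i}(q)-\E_m[\chi_{I^\delta_j}(q_0)]\big)$, bounded on a good cube by $n\,\alpha\la\alpha/\delta$ since $n$ may be chosen $\la1/\delta$. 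Summing over the cubes yields $(\star)$ with $E\la\alpha+k/l+\delta+\alpha/\delta$, where the implicit constant depends only on $a,b,A$ and on $L_M$ and $\|M\|_{L^\infty[0,C(A)/a]}$, but not on $u,p,\varphi$. Letting $l\to\infty$ removes the $k/l$ term, $k\to\infty$ does nothing, $\alpha\to0$ (with $\delta$ still fixed) removes the $\alpha$ and $\alpha/\delta$ terms, and finally $\delta\to0$ removes the rest — which is precisely the prescribed order of limits.

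The lemma then follows: applying $(\star)$ at $\varphi=\varphi^p_K$ and using $\frac{1}{(2l+1)^d}\sum_{z\in\Lambda}M(\varphi^p_K/p_z)=\bar\rho_K=R(u,\Phi(u,\bar\rho_K))$ gives $|R(u,\varphi^p_K)-R(u,\Phi(u,\bar\rho_K))|\le E$. Since $\partial_\varphi R(u,\varphi)=\E_m\big[M'(\varphi/(v(u)+q_0))/(v(u)+q_0)\big]\ge b^{-1}\inf_{[0,C(A)/a]}M'>0$, uniformly in $u$ and in $\varphi\in[0,C(A)]$, the inverse $\Phi(u,\cdot)$ is Lipschitz on the relevant range with a constant independent of $u$; hence $|\varphi^p_K-\Phi(u,\bar\rho_K)|\la E$, and passing to the iterated limit concludes. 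I expect the only genuinely delicate point to be the bookkeeping inside $(\star)$: partitioning $[a,b]$ into $\sim1/\delta$ cells forces an error of size $\sim\alpha/\delta$, because the empirical mass of each cell is only controlled to accuracy $\alpha$, and this term is harmless exactly because the statement sends $\alpha\to0$ before $\delta\to0$; the remaining ingredients — uniform continuity of $M$, negligibility of the remainder cube $B_L$, and uniformity in $u$ and $p$ (automatic, since $E$ depends only on the parameters) — are soft.
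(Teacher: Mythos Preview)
Your proof is correct and takes essentially the same approach as the paper: both reduce to a quenched ergodic estimate comparing $\frac{1}{(2l+1)^d}\sum_{z} M(\varphi/p_z)$ with $R(u,\varphi)$ via the cube decomposition $B_1,\dots,B_L$ and the step-function approximation over the partition $(I^\delta_j)_j$, obtaining the same $C(\delta+\alpha\delta^{-1})$ error, and then invert using the uniform Lipschitz continuity of $\Phi(u,\cdot)$. The only cosmetic difference is that you establish the ergodic estimate $(\star)$ uniformly in $\varphi\in[0,C(A)]$ and apply it directly at $\varphi=\varphi^p_K$, whereas the paper first extracts a limit point $\varphi$ of $\varphi^p_K$ along a maximizing subsequence and performs the identical estimate at that fixed $\varphi$; your version is marginally more direct but not substantively different.
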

\begin{proof}[Proof of Lemma~\ref{lemma_OBE1}]
 Under the law $\nu^{l,p}_{\varphi^p_K}$, the random variables $(p_x g(\eta(x)) - \varphi^p_K)_{x\in B_i}$ are
independent with zero mean. Furthermore their variance is uniformly bounded by continuity in $p$ and $K$. Thus
the expression to be bounded vanishes by a law of large numbers.
\end{proof}
\begin{proof}[Proof of Lemma~\ref{lemma_OBE2}]
 The absolute value in the given equation is obviously continuous in $p$ and $K$. Using the fact that
$\widetilde{A}^{l,k,\delta}_{0,u,\alpha}$
is closed and $v(\cdot)$ is continuous, it is straight-forward to prove that the supremum over $p$ and $K$ is continuous
with respect to $u$. Hence for every $l\in\N$, the supremum in the hypothesis is achieved for some $u\in\T^d$,
$p\in\widetilde{A}^{l,k,\delta}_{0,u,\alpha}$ and $0\le K \le (2l+1)^d A$. For these maximizers, let $\varphi$ be a
limit point as $l\to\infty$ and $\varphi^p_K$ the corresponding subsequence. 
Then the absolute value in the given formula in Lemma
\ref{lemma_OBE2} is bounded from above by
\begin{multline}\label{lem_OBE2}
  \Big| \varphi^p_K - \varphi\Big| + \Big| \varphi - \Phi\Big(u,{\textstyle
\frac{1}{(2l+1)^d} \sum_{z\in\Lambda_l} M\big(\frac{\varphi}{p_z}\big)}\Big) \Big|\\ + \Big| \Phi\Big(u,{\textstyle
\frac{1}{(2l+1)^d} \sum_{z\in\Lambda_l} M\big(\frac{\varphi}{p_z}\big)}\Big) -
\Phi\Big(u,{\textstyle \frac{1}{(2l+1)^d} \sum_{z\in\Lambda_l} M\big(\frac{\varphi^p_K}{p_z}\big)}\Big) \Big|,
\end{multline}
where we have used \eqref{M_func}, \eqref{GC} and \eqref{varphi_p_K}. By continuity and since $\varphi^p_K$ converges
to $\varphi$ as $l\to\infty$, the first and last terms vanish in the limit. Now by definition of $\Phi$
and~\eqref{R_func}, we write
\[
 \varphi = \Phi(u,R(u,\varphi)) = \Phi\Big(u,\E_m\Big[M\big({\textstyle\frac{\varphi}{v(u) + q_0}}\big)\Big]\Big).
\]
Since $\Phi(u,\cdot)$ is Lipschitz continuous uniformly in 
$u\in\T^d$, the second term in \eqref{lem_OBE2} is bounded 
from above by
a multiple of
\[
  \bigg| \E_m\Big[M\big({\textstyle\frac{\varphi}{v(u) + q_0}}\big)\Big] - { \frac{1}{(2l+1)^d} \sum_{z\in\Lambda_l}
M\big({\textstyle\frac{\varphi}{p_z}}\big)} \bigg|.
\]
By the triangle inequality and since 
$p\in \widetilde{A}^{l,k,\delta}_{0,u,\alpha}$ is of the form $p_z = q_z + v_z$
with $|v_z - v(u)| \le
\alpha$, this term is bounded from above by
\[
 \frac{1}{L}\sum_{i=1}^L \chi_{A^{l,k,\delta}_{0,i,\alpha}} (q)\bigg| \E_m\Big[M\big({\textstyle\frac{\varphi}{v(u)
+
q_0}}\big)\Big] - { \frac{1}{(2k+1)^d} \sum_{z\in B_i} M\big({\textstyle\frac{\varphi}{v(u)+q_z}}\big)} \bigg| +
C\alpha
\]
for all small enough $\alpha > 0$. Hence it is enough to consider
\[
 \sup_{q\in A^{l,k,\delta}_{0,1,\alpha}} \bigg| \E_m\Big[M\big({\textstyle\frac{\varphi}{v(u) + q_0}}\big)\Big] - {
\frac{1}{(2k+1)^d}
\sum_{z\in \Lambda_k} M\big({\textstyle\frac{\varphi}{v(u) + q_z}}\big)} \bigg|.
\]
For each $q$, the absolute value in this formula is bounded from above by
\begin{multline*}
 \bigg| \E_m\Big[M\big({\textstyle\frac{\varphi}{v(u) + q_0}}\big)\Big] - \sum_{j=1}^{n-1}
M\big({\textstyle\frac{\varphi}{v(u)+\beta_j}}\big) m(I^\delta_j)
\bigg|\\
+ \bigg| \sum_{j=1}^{n-1} M\big({\textstyle\frac{\varphi}{v(u)+\beta_j}}\big) \big( m(I^\delta_j) -
N^{l,k,\delta}_{j,1}(q) \big) \bigg|\\
+ \bigg| \frac{1}{(2k+1)^d} \sum_{z\in \Lambda_k} \sum_{j=0}^{n-1} \Big(
M\big({\textstyle\frac{\varphi}{v(u)+\beta_j}}\big) - M\big({\textstyle\frac{\varphi}{v(u)+q_z}}\big) \Big) \chi_{ I^\delta_j } (q_z ) \bigg|.
\end{multline*}
By the Lipschitz-continuity of $M$ and since 
$n \le C\delta^{-1}$, this expression is bounded from above by 
$C(\delta + \alpha \delta^{-1})$
for all $q\in A^{l,k,\delta}_{0,1,\alpha}$. Taking the limit, first for $\alpha \to 0$ and then for
$\delta \to 0$,  finishes the proof of the one block estimate.
\end{proof}

\begin{remark}
The above estimates correspond to the approximation of the integral with respect to the
measure $m$ by an integral of simple functions.
\end{remark}
\vskip0.1cm
\subsection{Two blocks estimate}

Similarly, we can prove the two blocks estimate, which shows that the
difference between averages over boxes of size $l$ and $\epsilon N$ is
negligible in the limit.
\begin{lemma}[Two Blocks Estimate]
 It holds that
\[
 \limsup_{l\to\infty} \limsup_{\epsilon\to0} \limsup_{N\to\infty} \sup_{|y| \le
\epsilon N} \E_{\mu^N}\bigg[ \int_0^T \frac{1}{N^d} \sum_{x\in\T^d_N} \big|
\eta_t^l(x+y) - \eta_t^{\epsilon N}(x) \big| \;dt \bigg] = 0
\]
 $m$-almost surely.
\end{lemma}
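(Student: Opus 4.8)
The plan is to follow the classical two blocks scheme of \cite{GPV88,KL99}, recycling the environment bookkeeping developed above for the one block estimate. First one rewrites $\eta^{\epsilon N}(x)$ as the spatial average of $\eta^l(x+y')$ over $|y'|\le\epsilon N$; the discrepancy between the two is supported on a shell of width $l$ around $\partial\Lambda_{x,\epsilon N}$ and contributes $\la l/(\epsilon N)\to0$ to $\frac1{N^d}\sum_x\E_{\mu^N}\big[\int_0^T(\cdot)\,dt\big]$ (using the second moment bound of Theorem~\ref{T}). Combined with the triangle inequality and a relabelling of the sum over the torus, the claim reduces to
\[
 \limsup_{l\to\infty}\limsup_{\epsilon\to0}\limsup_{N\to\infty}\ \sup_{|w|\le2\epsilon N}\ \E_{\mu^N}\bigg[\int_0^T\frac1{N^d}\sum_{x\in\T^d_N}\big|\eta_t^l(x+w)-\eta_t^l(x)\big|\,dt\bigg]=0\quad m\text{-a.s.}
\]
As in the one block estimate we pass to $F^N=\int_0^T f^N_t\,dt$, which obeys the a priori bounds~\eqref{ap-bounds}, and, arguing exactly as for the one block estimate, the cutoff lemma lets us replace the observable by the bounded $W^A_{x,w}(\eta):=|\eta^l(x+w)-\eta^l(x)|\,\chi_{\{\eta^l(x)\le A,\,\eta^l(x+w)\le A\}}$, the complementary events being controlled uniformly as $A\to\infty$ via the entropy inequality and the second moment bound of Theorem~\ref{T}.

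Next, fixing $w$ with $|w|\le2\epsilon N$, set $B_1=x+\Lambda_l$ and $B_2=x+w+\Lambda_l$ and let $F_x$ be the marginal of $F^N\,d\nu^{N,p}_\varphi$ on $B_1\cup B_2$. One keeps in the Dirichlet form only the bonds inside $B_1$, the bonds inside $B_2$, and a single auxiliary ``channel'' bond joining $B_1$ to $B_2$, whose Dirichlet cost is dominated, by Cauchy--Schwarz, by a telescoping sum along a lattice path of length $|w|\le2\epsilon N$. Convexity of the bond forms and a multiplicity count then give $\mathcal D^{N,p}(F^N)\ga(2l+1)^{-d}\sum_x\big(\mathcal D^p_{B_1}(F_x)+\mathcal D^p_{B_2}(F_x)\big)$ and $\mathcal D^{N,p}(F^N)\ga|w|^{-2}\sum_x I^p_{\mathrm{ch}}(F_x)$, so with~\eqref{ap-bounds} we may subtract, up to an additive error $\la\gamma$ (sent to $0$ at the very end), a penalty of order $\gamma N^2(2l+1)^{-d}$ on the box bonds and of order $\gamma\epsilon^{-2}$ on the channel bond. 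Sending $N\to\infty$ (box rate $\to\infty$) and then $\epsilon\to0$ (channel rate $\to\infty$), using lower semicontinuity of the Dirichlet form, the problem reduces to bounding $\int W^A_{x,w}\,f$ over densities $f$ on $\N^{B_1}\times\N^{B_2}$ with vanishing box and channel Dirichlet forms; such an $f$ is constant on each hyperplane of fixed \emph{total} mass of $B_1\cup B_2$, that is, a mixture of the canonical measures $\nu^p_{l,K}$ (the grand-canonical on $B_1\cup B_2$ conditioned on total mass $K$).

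To deal with the $p$-dependence one intersects with the good sets $\widetilde A^{l,k,\delta}$ already introduced for the one block estimate: outside them the fraction of sites is $o_N(1)+\P_m(q\notin A^{l,k,\delta}_{0,\alpha})$, which vanishes as $l,k\to\infty$ by ergodicity, while on them $v(\cdot/N)$ is constant up to $\alpha$ on $B_1\cup B_2$ by uniform continuity of $v$. Refining each $B_i$ into size-$k$ sub-boxes as in the one block estimate and applying the equivalence of ensembles (Lemma~\ref{L:EoE}), one may compute with the grand-canonical measure $\nu^{l,p}_{\varphi^p_K}$, under which $\E[\eta^l(B_i)]=(2l+1)^{-d}\sum_{z\in B_i}M(\varphi^p_K/p_z)$ for $i=1,2$; on a good environment, exactly as in the proof of Lemma~\ref{lemma_OBE2}, \emph{both} of these averages converge to the common annealed density $R(u,\varphi^p_K)$, whence a law of large numbers yields $\int|\eta^l(B_1)-\eta^l(B_2)|\,d\nu^p_{l,K}\to0$ uniformly in the maximizers. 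Taking the limits in the order $N\to\infty$, $\epsilon\to0$, $\alpha,\delta\to0$, $l,k\to\infty$, and finally $\gamma\to0$ then concludes.

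The only genuinely new difficulty compared with \cite{KL99} is the quenched control of the environment, and it is of exactly the same nature as the one already met in the one block estimate: one must make sure that along the whole channel joining the two boxes the empirical law of $q$ is close to $m$ and $v$ is nearly frozen, so that the expected local densities in $B_1$ and in $B_2$ agree in the limit at the annealed level $R$ and the canonical concentration can close the estimate. The telescoping bound on the channel bond, the bookkeeping of the $N^2$ and $\epsilon^{-2}$ penalty rates, and the order of the iterated limits are then the standard two blocks manipulations of \cite{GPV88,KL99}.
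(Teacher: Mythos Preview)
Your proposal is correct and follows precisely the approach the paper gestures at: the classical two blocks scheme of \cite{GPV88,KL99}, adapted to the random environment as in \cite{Kou99}, with the slowly varying mean handled by noting that $v$ varies by at most $O(\epsilon)$ between the two boxes so that the annealed density $R(u,\cdot)$ is asymptotically the same for $B_1$ and $B_2$. One small remark on your closing paragraph: the environment need not be ``good'' along the channel itself (the uniform bound $p^N\in[a,b]$ already suffices for the telescoping of the bond Dirichlet forms), only inside $B_1$ and $B_2$ separately, together with the $O(\epsilon)$-closeness of $v(\cdot/N)$ at the two centres.
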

We will skip the proof here and just note that it follows 
in a similar manner like the one block estimate --- the main difference
lying in restricting the process to two boxes of size $l$ (that are at most $\epsilon N$ apart) instead of just one.
For details, the reader may refer to \cite{KL99, Kou99}.

%

\section{Outlook and Examples: numerical simulations}\label{sec:sim}



In this section, we present examples for chemotactic motion of particles in a 
slowly varying chemical attractive environment in a slightly more
general setting than our theory has so far addressed. This is done in order
to get further insight into the behavior of such particle systems. Consider the (smooth)
function $\vartheta:\T^d \to (0,\infty)$. Assume that the 
attractive chemical
environment $\zeta(x)$ on the lattice $\T^d_N$ is given by independent Poisson random variables with parameter
$\vartheta(\frac{x}{N})$ at site $x\in\T^d_N$. As before, let $m$ denote the law of this random environment. One
might think of this distribution as resulting from independent diffusion of 
attractive chemical molecules on a time-scale much faster 
than the typical time-scale of the diffusive motion of cells so that it is effectively time independent with respect to the random motion of the cells.
This assumption on the time-scales is biologically reasonable and helpful also for theoretical reasons as outlined in the introduction. 
%
%
We suppose that the cells motion is described by a Markov process with
generator given by \eqref{L}, where the
environment $p^N$ depends on the number of attractive chemical molecules $\zeta$ via
\[
 p^N_x = f(\zeta(x)) \qquad \text{for all $x\in\T^d_N$ .}
\]
Thus $f:\N\to(0,\infty)$ provides a microscopic description of the chemical bias.
For simplicity, let us suppose for the rate $g(\eta) = \eta$, i.e.\ that the cells diffuse independently in the random environment $p^N$.
Note that the ``random part'' $p^N_x - \E_m[f(\zeta(x))]$ consists of independent but not stationary random variables,
so our theory given before does not apply directly in this case. However, on local boxes the density $\vartheta(\frac{x}{N})$ is
approximately constant, say $\vartheta(u)$, and the environment consists of almost identically and
independently distributed particles, so that we expect  that with techniques as given above a hydrodynamic limit can be derived in
this case, too. To identify this limit, consider $M$ given as in \eqref{M_func}. Since $g$ is the identity,
$\nu^1_\varphi$ is a product of identical Poisson distributions with parameter $\varphi$ and it follows $M(\varphi) =
\varphi$. Therefore we see that $R$ as defined in \eqref{R_func} is given by
\[
 R(u,\varphi) = \E_{\text{Pois}(\vartheta(u))}\bigg[\frac{\varphi}{f(\zeta(x))}\bigg],
\]
where the average on the right hand side is taken over a Poisson distribution with parameter $\vartheta(u)$.
Solving for $\varphi$ yields
\[
 \Phi(u,\rho) = \E_{\text{Pois}(\vartheta(u))}\bigg[\frac{1}{f(\zeta(x))}\bigg]^{-1} \rho.
\]
In other words, the effective influence of the environment is given by its \emph{harmonic mean}. This is reminiscent of
analogous formulas in (one-dimensional) stochastic homogenization, cf.\ \cite{PV81}. Let us denote
\[
 \widetilde{f}(\vartheta(u)) := \E_{\text{Pois}(\vartheta(u))}\bigg[\frac{1}{f(\zeta(x))}\bigg]^{-1},
\]
which yields the relationship between the microscopic chemical bias $f$ and its macroscopic counterpart
$\widetilde f$. Analogous to (\ref{lim_eqn}) the equation describing the limit of the empirical measures is then explicitly given by
\begin{equation}\label{simple_lim}
 \partial_t \rho(t,u) = \Delta\big[\widetilde{f}(\vartheta) \rho \big](t,u),\quad (t,u) \in [0,T]\times\T^d,
\end{equation}
which we rewrite as
\[
 \partial_t \rho = \nabla \cdot \Big([\widetilde{f}
(\vartheta)\nabla\rho +
\widetilde{f}'(\vartheta) \rho \nabla \vartheta \Big],
\]
which is of the form \eqref{cells} with $k(\rho, \vartheta) = \widetilde{f}(\vartheta)$ and $\chi (\rho,
\vartheta) = - \rho\widetilde{f}'(\vartheta)$, respectively
$\tilde \chi (\rho, \vartheta) = - \widetilde{f}'(\vartheta)$. From
\eqref{simple_lim} it follows that the
stationary states are given by multiples of
\begin{equation}\label{stead_states}
 \rho_\infty(u) = \frac{1}{\widetilde{f}(\vartheta(u))} =
\E_{\text{Pois}(\vartheta(u))}\bigg[\frac{1}{f(\zeta(x))}\bigg].
\end{equation}
%
%
The respective coefficient is determined by the initial mass, which is 
conserved.
In order to perform numerical simulations, let us suppose that the 
dependence of $f(\zeta)$ on the attractive chemical molecules  
is explicitly given by 
\[
 f(\zeta) = \nu+\frac{\chi_0}{1+\zeta(x)}
\]
with parameters $\nu, \chi_0 > 0$ describing the relative strengths of the free diffusion and the chemotactic motion.
Specifically, we set $\chi_0 = 2$ and $\nu = 0.5$. Furthermore, let
\begin{equation}\label{CA_dens}
 \vartheta(u) = 30\exp\big(-60((u_1 - 0.5)^2 + (u_2 - 0.5)^2)\big).
\end{equation}
The simulations of the stochastic particle system are carried out with an 
algorithm based on random sequential updates. Here the particle densities are
measured over balls in $\T^2$ of radius $0.05$. The results are then compared 
with the numerical solution of the corresponding
limiting PDE, which is obtained by using a standard finite difference method. 
Since we only deal with the first equation of a chemotaxis system and because 
we are not in the situation where blowup in finite time is expected, we
do not go into further details concerning more refined algorithms 
for the PDE in this illustrating section.

The simulations for a lattice size of $N\times N$ with
$N=250$ and uniform initial configuration $\eta(x) = 4$ for all $x\in\T^2_N$ are shown in
Figs.~\ref{fig:ce1}--\ref{fig:PDE5}.
\\[\intextsep]
\begin{minipage}{0.5\linewidth}
\centering
\includegraphics[width=\linewidth]{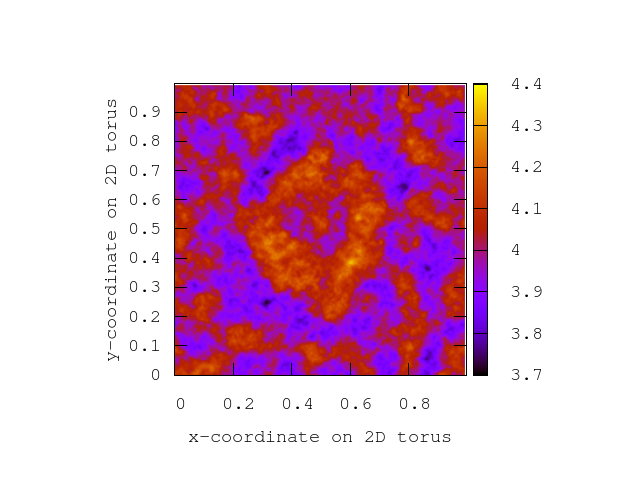}
\figcaption{Cell density for particle system, $t=0.0008$}
\label{fig:ce1}
\end{minipage}
\begin{minipage}{0.5\linewidth}
\centering
\includegraphics[width=\linewidth]{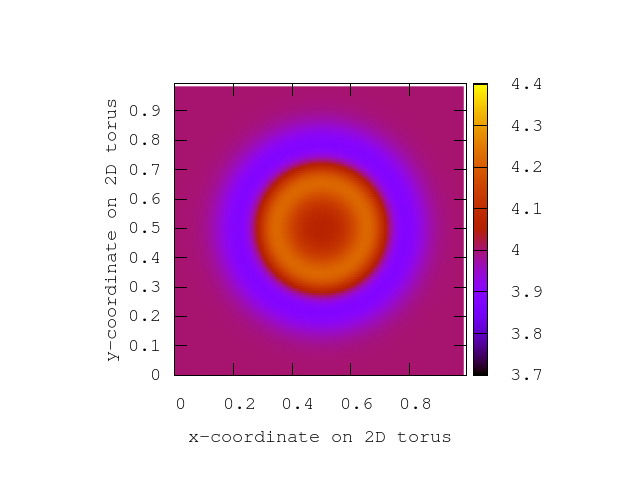}
\figcaption{PDE solution, $t=0.0008$}
\label{fig:PDE1}
\end{minipage}
\\[\intextsep]
\begin{minipage}{0.5\linewidth}
\centering
\includegraphics[width=\linewidth]{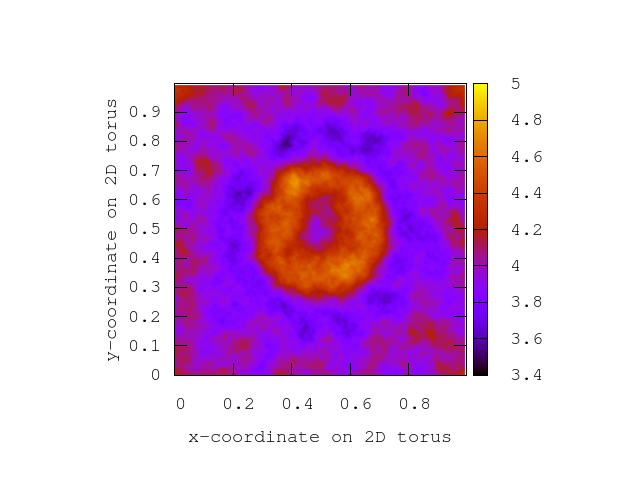}
\figcaption{Cell density for particle system, $t=0.004$}
\label{fig:ce2}
\end{minipage}
\begin{minipage}{0.5\linewidth}
\centering
\includegraphics[width=\linewidth]{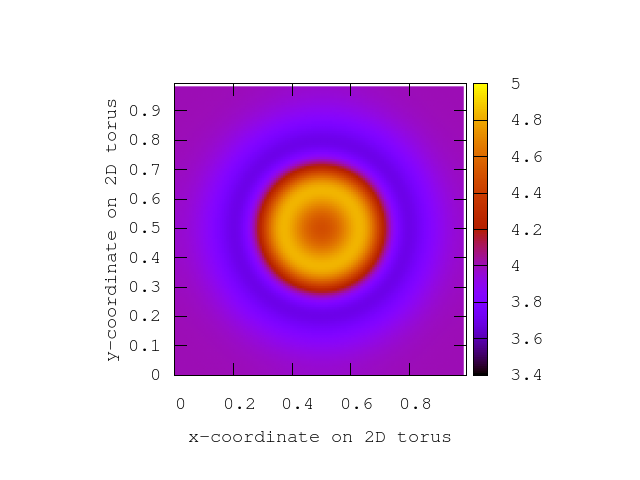}
\figcaption{PDE solution, $t=0.004$}
\label{fig:PDE2}
\end{minipage}
\\[\intextsep]
\begin{minipage}{0.5\linewidth}
\centering
\includegraphics[width=\linewidth]{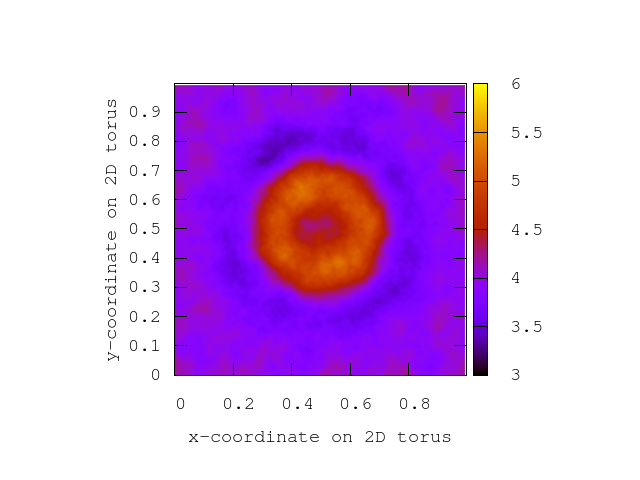}
\figcaption{Cell density for particle system, $t=0.01$}
\label{fig:ce3}
\end{minipage}
\begin{minipage}{0.5\linewidth}
\centering
\includegraphics[width=\linewidth]{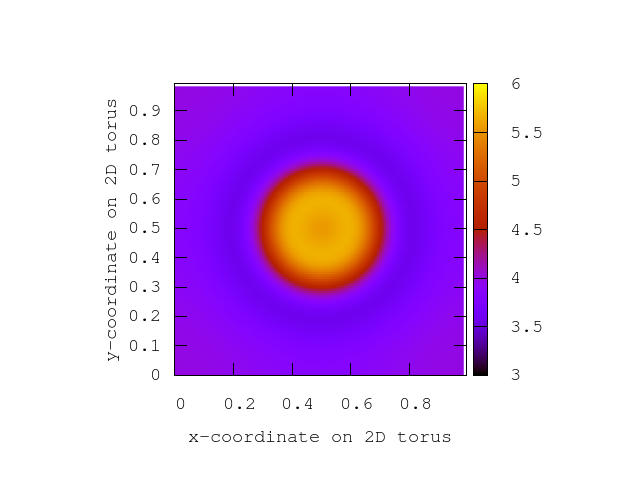}
\figcaption{PDE solution, $t=0.01$}
\label{fig:PDE3}
\end{minipage}
\\[\intextsep]
\begin{minipage}{0.5\linewidth}
\centering
\includegraphics[width=\linewidth]{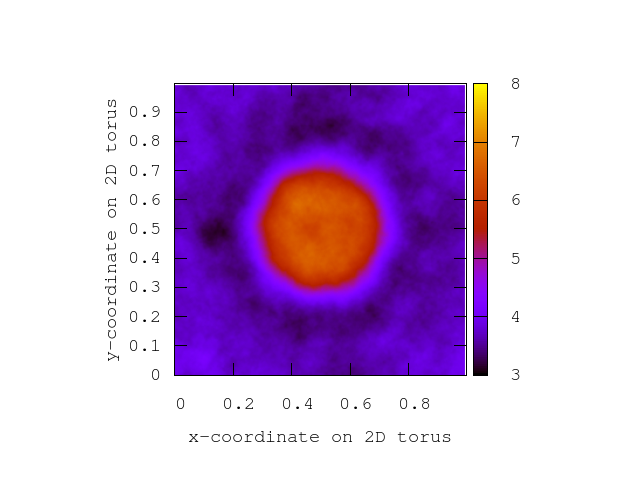}
\figcaption{Cell density for particle system, $t=0.04$}
\label{fig:ce4}
\end{minipage}
\begin{minipage}{0.5\linewidth}
\centering
\includegraphics[width=\linewidth]{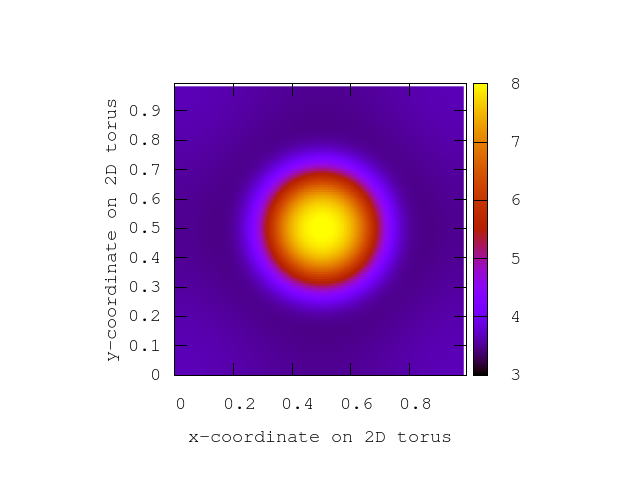}
\figcaption{PDE solution, $t=0.04$}
\label{fig:PDE4}
\end{minipage}
\\[\intextsep]
\begin{minipage}{0.5\linewidth}
\centering
\includegraphics[width=\linewidth]{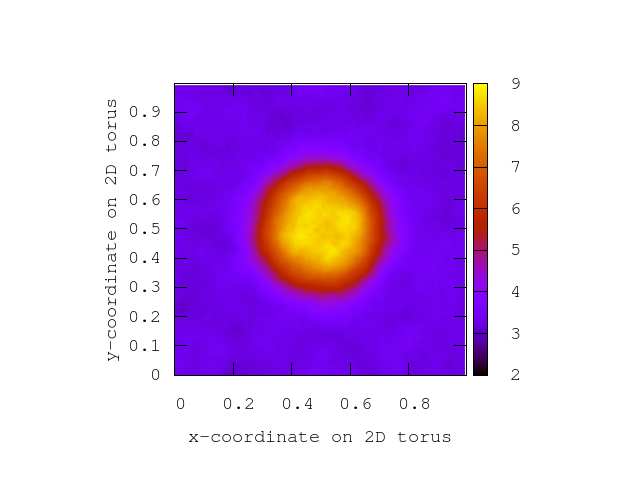}
\figcaption{Cell density for particle system, $t=0.2$}
\label{fig:ce5}
\end{minipage}
\begin{minipage}{0.5\linewidth}
\centering
\includegraphics[width=\linewidth]{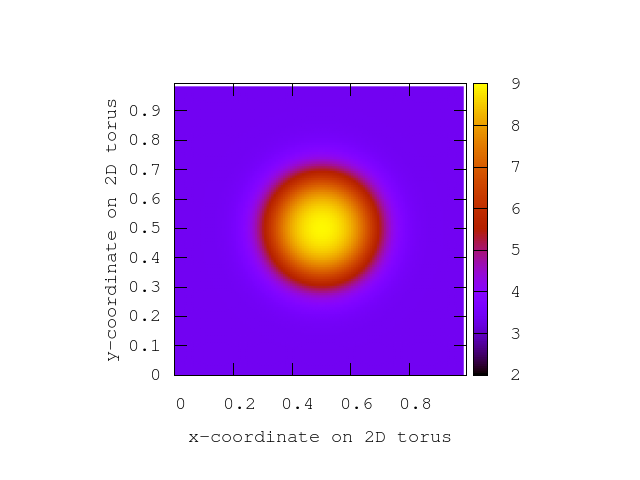}
\figcaption{PDE solution, $t=0.2$}
\label{fig:PDE5}
\end{minipage}
\\[\intextsep]
The simulations for the cell density exhibit an interesting phenomenon near the boundary of the 
support of the positive density of the chemo-attractant, which itself is not  
visualized in the figures. 
Initially a
ring of lower cell density forms around the area where the cells finally will concentrate 
due to positive chemotaxis, best visible as the blue ring in Figures 2 and 4. This can be explained by
particles jumping out of the region of this ring into the domain of positive chemical attraction where they get stuck 
due to their lower motility there. If they jump out of this domain again, then they
may not move very far away from it by random motion. This leads to a density profile which is a non-monotone function of the distance from the centre. For longer times this effect vanishes, however, and the density approaches the steady
state, which (in the hydrodynamic limit as $N\to\infty$) is numerically indistinguishable from the solution of the PDE
at time $t=0.2$ shown in Figure \ref{fig:PDE5}. 
Initially, the fluctuations of the initial condition dominate in the simulation of the particle system, which explains the rather large difference to the deterministic solution of the PDE. Averaging over several realizations of the particle system would lead to closer resemblance between the two.
%
%
\\

Finally, let us remark that it is possible to also consider models with 
a strong localization of particles. 
Again we assume that the chemical environment results from a random
process. 
For the case considered below we do not have a proof for a hydrodynamic limit so far,
but expect a similar outcome as before.
Also, we do not have stationarity in this case but convergence towards
a stationary random variable.
Now we simulate only the respective particle model, since
the solution for the limiting PDE would require more refined
numerical schemes than given above, because now blow-up phenomena may occur. \\
Our example here is the following, let us
consider a zero range process within the range of so-called condensation. 
Specifically, we consider a ZRP
with a constant jump rate $g(n) = g_0$ for all $n > 0$. As soon as $\varphi > g_0$, the partition function $Z$ defined
in \eqref{Z} does not converge and the corresponding grand-canonical measure (\ref{GC}) does not exist. As shown in \cite{krug,landim},
for large enough densities the particles tend to concentrate on only a few sites with the lowest exit rate. 
%
%
However, in this case, this behavior is not due to the
reinforced interaction of the cells and the attractive chemical molecules, 
but instead it is solely due to the stochastic properties of the
ZRP. 
Such strong localization effects may relate to finite time blow-up of 
solutions of related limiting objects.
\\

Figs.\ \ref{fig:cond1}--\ref{fig:cond5} show the results of simulations for
this model with $N=100$, $g(n) = 3$, and all other
parameters given as before, i.e.\ uniform initial configuration 
$\eta(x) = 4$, $\chi_0 = 2$, $\nu = 0.5$ and $\vartheta$ given
in \eqref{CA_dens} with final averaging over the empirical measure with radius $0.05$. In fact, this radius can be seen
explicitly at points where many particles have clustered in the figures below.
Figure \ref{fig:cond_CA} shows the random density of the chemo-attractant in
order to
compare with the location of the cell aggregate. 
\\[\intextsep]
\begin{minipage}{0.5\linewidth}
\centering
\includegraphics[width=\linewidth]{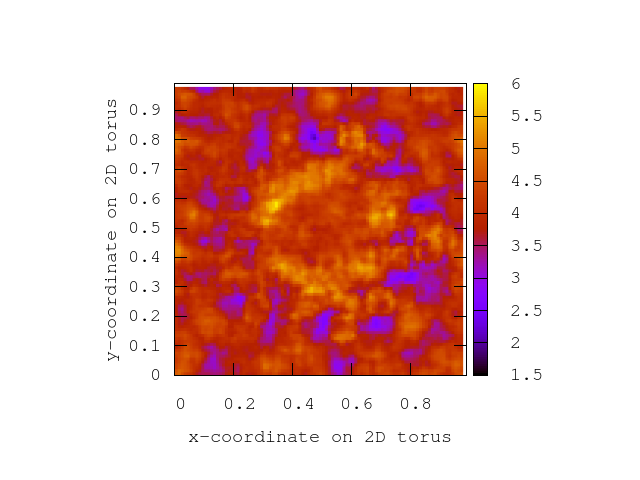}
\figcaption{Cell density, $t=0.008$}
\label{fig:cond1}
\end{minipage}
\begin{minipage}{0.5\linewidth}
\centering
\includegraphics[width=\linewidth]{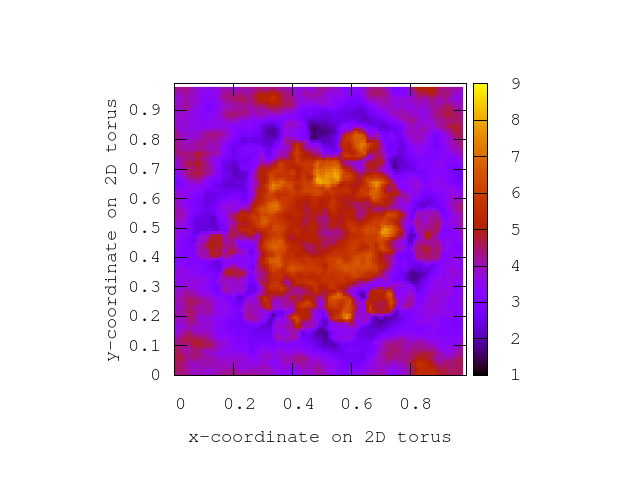}
\figcaption{Cell density, $t=0.04$}
\label{fig:cond2}
\end{minipage}
\\[\intextsep]
\begin{minipage}{0.5\linewidth}
\centering
\includegraphics[width=\linewidth]{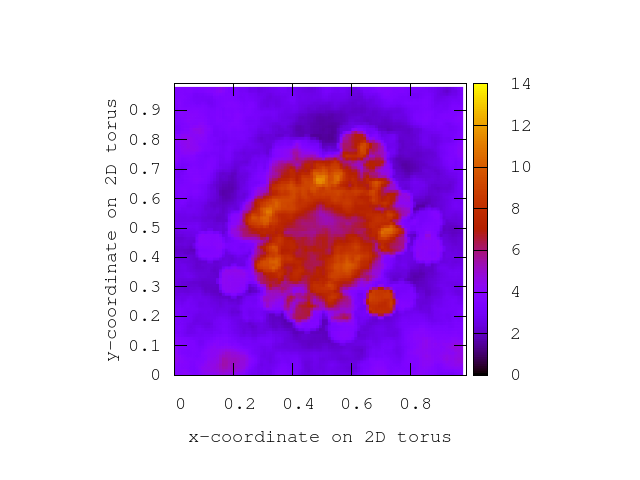}
\figcaption{Cell density, $t=0.1$}
\label{fig:cond3}
\end{minipage}
\begin{minipage}{0.5\linewidth}
\centering
\includegraphics[width=\linewidth]{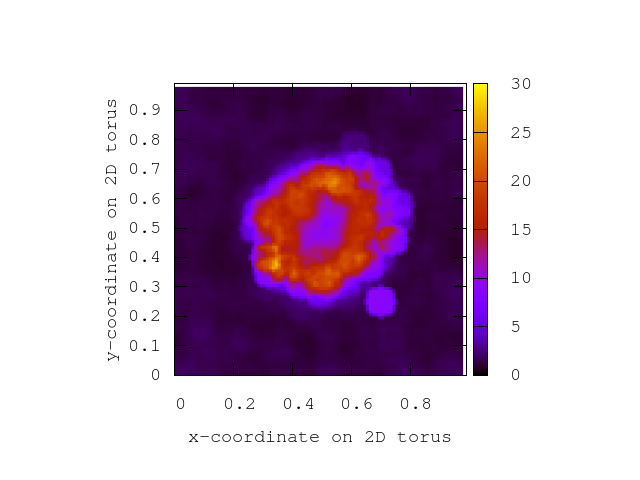}
\figcaption{Cell density, $t=0.4$}
\label{fig:cond4}
\end{minipage}
\\[\intextsep]
\begin{minipage}{0.5\linewidth}
\centering
\includegraphics[width=\linewidth]{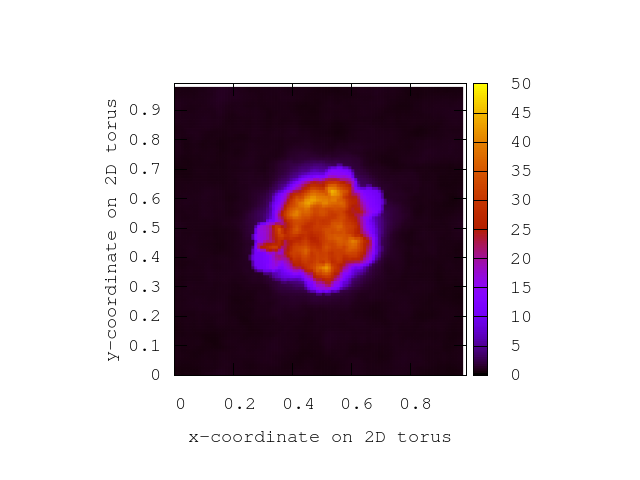}
\figcaption{Cell density, $t=2$}
\label{fig:cond5}
\end{minipage}
\begin{minipage}{0.5\linewidth}
\centering
\includegraphics[width=\linewidth]{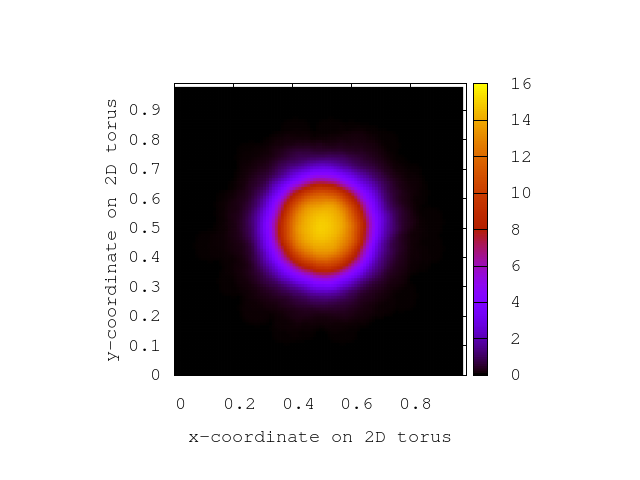}
\figcaption{ Density of the chemo-attractant $\vartheta$ (random)}
\label{fig:cond_CA}
\end{minipage}
\\[\intextsep]
If the total density in the system is high enough, we expect that eventually a finite fraction of all particles will concentrate on the site with the highest number of chemical molecules.

\section{Concluding remarks}
In this paper we derived the first equation of a general class of
chemotaxis systems via a hydrodynamic limit of a stochastic lattice gas.
The attractive chemical environment is prescribed in our case and assumed
to be random and stationary, with a slowly varying mean.
The situation of very strong clustering of the particles on single lattice sites
is excluded in our theory, although this phenomenon can happen during
selforganization of chemotactic particles in certain situations, and then it is of  important biological relevance, like for the slime mold
amoebae {\sl Dictyostelium discoideum}. It would be interesting to be
able to derive the full chemotaxis system from a hydrodynamic limit.
Technically this is more challenging and methods of scale separation
might play a role here.\\

\centerline{\bf{Acknowledgements}}
Major parts of this work were done while D.M. and A.S. were working
at the University of Heidelberg. This work was finished while A.S. took part
in the CGP-program, 2015 at the Isaac Newton Institute, Cambridge.


\end{document}